\theoremstyle{plain}
\newtheorem{thm}{Theorem}[section] 
\theoremstyle{definition}
\newtheorem{defn}[thm]{Definition}
\newtheorem{prop}[thm]{Proposition}
\newtheorem{cor}[thm]{Corollary}
\newtheorem{lem}[thm]{Lemma}
\newtheorem{rem}[thm]{Remark}
\numberwithin{equation}{section}
\title{Uniform Bogomolov Conjecture for Algebraic Tori}
\author{Ruida Di}
\date{}
\newcommand{\Addresses}{{
  \bigskip
  \footnotesize

  \textsc{CAS, Morningside Ctr. Math., Beijing. Morningside Center of Mathematics and Chinese Academy of Sciences. 55 Zhongguancun E Rd, Beijing 100190, China.}\par\nopagebreak
  \textit{E-mail address:\ }\texttt{drdmath@amss.ac.cn}
}}
\begin{document}
\maketitle
\begin{abstract}
In this paper, we present a novel proof of the uniform Bogomolov conjecture for algebraic tori. To do this, we introduce a definition of non-degenerate subvarieties applicable to a family of algebraic tori and  establish an equidistribution theorem in this setting. Our method stems from recent advancements in the uniform Mordell-Lang conjecture, particularly the breakthrough results by Dimitrov, Gao, and Habegger, as well as independent work by Kühne and the collaborative effort of Gao, Ge, and Kühne. 
\end{abstract}
\tableofcontents
\section{Introduction}
This paper studies a central problem in Diophantine geometry: the uniform Bogomolov conjecture for algebraic tori. While this conjecture was resolved by Bombieri--Zannier~\cite{BZ95}, we present a novel approach to this theorem. 

Let \(\mathbf{G}_m^n\) denote the \(n\)-dimensional algebraic torus and \((\mathbb{P}^1)^n\) the product of projective lines, both defined over \(\overline{\mathbb{Q}}\). Consider the canonical embedding

\begin{equation}
    \phi: \mathbf{G}_m^n \hookrightarrow (\mathbb{P}^1)^n, \quad \phi(x_1,\ldots,x_n) = ([1:x_1],\ldots,[1:x_n]),
\end{equation}
which identifies \(\mathbf{G}_m^n\) as a Zariski open subset of \((\mathbb{P}^1)^n\).

For any subvariety \(Z \subset \mathbf{G}_m^n\), define its \textit{stabilizer} as
\begin{equation}\label{Stab}
    \mathrm{Stab}(Z) := \{a \in \mathbf{G}_m^n : aZ = Z\},
\end{equation}
and let \(Z_{\mathrm{ex}}\) denote the \textit{exceptional locus}:
\begin{equation}
    Z_{\mathrm{ex}} := \bigcup_{\substack{gH \subset Z \\ \dim H \geq 1}} gH,
\end{equation}
where the union ranges over all translates \(gH\) of positive-dimensional subtori \(H \subset \mathbf{G}_m^n\) contained in \(Z\). The \textit{essential locus} is defined as \(Z^\circ := Z \setminus Z_{\mathrm{ex}}\).

Equip \(\mathbf{G}_m^n\) with the standard height function
\begin{equation}
    \hat{h}(x) := \sum_{i=1}^n h(x_i),
\end{equation}
where \(h\) denotes the absolute logarithmic Weil height on \(\mathbb{P}^1(\overline{\mathbb{Q}})\).

The uniform Bogomolov conjecture for algebraic tori, established by Bombieri--Zannier~\cite{BZ95}, asserts the following quantitative control on small points:

\begin{thm}\label{1.1.1}
There exist constants \(c_1(d,n) > 0\) and \(c_2(d,n) > 0\) such that for every irreducible subvariety \(Z \subset \mathbf{G}_m^n\) with \(\deg_{\mathcal{O}(1,\ldots,1)} \overline{Z} = d\), 
\begin{equation}
    \#\left\{x \in Z^\circ(\overline{\mathbb{Q}}) : \hat{h}(x) \leq c_2\right\} < c_1.
\end{equation}
\end{thm}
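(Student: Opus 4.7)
The plan is to parallel the Dimitrov--Gao--Habegger and Kühne approach to the uniform Mordell--Lang conjecture, transposed to the Bogomolov setting for $\mathbf{G}_m^n$, and to exploit the family-version equidistribution theorem promised in the abstract.

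First I would set up a universal family. Instead of treating one subvariety $Z$ at a time, I work with the Chow variety $\mathrm{Chow}_{d,k,n}$ parametrising effective dimension-$k$ degree-$d$ cycles of $(\mathbb{P}^1)^n$, and stratify so that on each irreducible stratum $S$ the generic fibre is an irreducible subvariety of degree $d$. Over $S$ one has a universal family $\pi: \mathcal{Z} \to S$. By Noetherian induction on $S$, Theorem~\ref{1.1.1} reduces to a uniform statement on each stratum, so I may assume $S$ is irreducible and shrink it to a convenient non-empty open subset whenever needed. Assume for contradiction that the theorem fails on $S$: there are $s_m \in S(\overline{\mathbb{Q}})$ and points $x_{m,1},\ldots,x_{m,N_m} \in \mathcal{Z}_{s_m}^\circ$ with $N_m \to \infty$ and $\max_i \hat h(x_{m,i}) \to 0$. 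Standard spreading-out yields, after passing to a subsequence and a subfamily, an irreducible closed $\mathcal{Y} \subset \mathcal{Z}$ dominating $S$ such that $\{x_{m,i}\}$ is Zariski dense in $\mathcal{Y}$ and its fibrewise canonical heights tend to zero. The family equidistribution theorem of this paper then furnishes, at each archimedean place, a canonical probability measure on the analytification of a component of $\mathcal{Y}$, necessarily supported on a union of torsion cosets inside each toric fibre.

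The decisive input is the new non-degeneracy criterion for subvarieties of a family of tori, which is designed to yield a DGH-style height inequality
\begin{equation*}
\hat h_{\mathbf{G}_m^n}(x) \geq c_3 \, h_{\overline{S}}(\pi(x)) - c_4
\end{equation*}
on the non-degenerate part of $\mathcal{Y}$, where $\overline{S}$ is a fixed projective compactification of $S$ equipped with a Weil height $h_{\overline{S}}$. If $\mathcal{Y}$ is non-degenerate, this bound applied to the small points $x_{m,i}$ forces $h_{\overline{S}}(s_m) = O(1)$; the $s_m$ then lie in a bounded-height set on $\overline{S}$, and Northcott together with the non-uniform Bombieri--Zannier theorem at finitely many specialisations produce the required uniform bound. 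If $\mathcal{Y}$ is degenerate, the definition provides a descent: one can replace $\mathcal{Y}$ by a subfamily lying in a proper subgroup scheme of $\mathbf{G}_m^n \times S$, or contained over a proper closed subset of $S$, thereby strictly decreasing either the relative dimension or the dimension of the base. Iterating, the process terminates in finitely many steps, either in the non-degenerate regime already handled or in a constant family where the classical Bombieri--Zannier theorem applies directly.

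The main obstacle I foresee is establishing the height inequality in the non-degenerate case. In the abelian setting it rests on semipositivity of the Betti form combined with the Kähler property of the total space; for $\mathbf{G}_m^n$ the natural substitute is the real-analytic $\mathrm{Log}$ map $(z_1,\ldots,z_n) \mapsto (\log|z_1|,\ldots,\log|z_n|)$, whose pullback of the Euclidean form is only semi-positive and has the wrong growth at the toric boundary. Showing that non-degeneracy of $\mathcal{Y}$ makes this form strictly positive in the correct codimension on a component of $\mathcal{Y}$, and converting that strict positivity into an archimedean contribution dominating the arithmetic intersection on a model of $\overline{\mathcal{Z}} \subset (\mathbb{P}^1)^n \times \overline{S}$, is where the bulk of the work sits. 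Once the height inequality and the family equidistribution theorem are secured, the descent above runs and yields explicit $c_1(d,n)$ and $c_2(d,n)$.
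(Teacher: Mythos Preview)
Your outline diverges from the paper's argument at the decisive step, and the divergence introduces a genuine gap. After your height inequality gives $h_{\overline S}(s_m)=O(1)$, you invoke Northcott to reduce to finitely many specialisations. But the varieties $Z$ in Theorem~\ref{1.1.1} range over \emph{all} irreducible degree-$d$ subvarieties defined over $\overline{\mathbb Q}$, so the base points $s_m\in S(\overline{\mathbb Q})$ have unbounded degree over $\mathbb Q$; Northcott does not apply, and there is no finite set of fibres to which the non-uniform Bombieri--Zannier theorem can be fed. This is exactly the obstacle that separates the uniform Bogomolov problem from the uniform Mordell--Lang problem: the DGH height-inequality endgame works for the latter because one combines it with Vojta/R\'emond inequalities, not with Northcott on the base. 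For Bogomolov one needs a different mechanism, and your sketch does not supply one. (Your claim that the equidistribution limit is ``supported on a union of torsion cosets inside each toric fibre'' is also not what the family equidistribution theorem produces; the limit is the current $U_\infty^{\wedge\dim X}$, supported on real tori in the total space, not on torsion cosets fibrewise.)

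The paper's route avoids any height inequality. It first \emph{forces} non-degeneracy by passing to a high fibre power $X:=\mathcal X_{r,d}^{[u]}|_{S'}$ (Corollary~\ref{345}), rather than carving out a non-degenerate locus and descending. The core of the argument is then to apply the equidistribution theorem \emph{twice}: once on $X^{[M+2]}$ and once on the image of the fibrewise Faltings--Zhang map $\alpha:(a_0,\dots,a_{M+1})\mapsto(a_0,a_1a_0^{-1},\dots,a_{M+1}a_0^{-1})$. An explicit computation with the measures $U_\infty^{\wedge d_M}$ (Section~\ref{5.1}) shows $\alpha_*\mu_1\neq\mu_2$, and Proposition~\ref{433} then converts this into a height gap off a proper Zariski closed $Z_{S'}\subset X^{[M+2]}$ (Lemma~\ref{sh}). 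A packing lemma of Gao--Ge--K\"uhne transfers this to a single fibre, and Noetherian induction on $\dim S$ finishes. So the ``non-degeneracy'' here is used only to guarantee that the equilibrium measure is nonzero, not to produce a height inequality; the uniformity comes from the discrepancy of the two equilibrium measures, which is a statement on the total space and makes no appeal to Northcott.
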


\noindent Here, \(\overline{Z}\) denotes the Zariski closure of \(Z\) in \((\mathbb{P}^1)^n\), and the degree is computed with respect to the \(\mathcal{O}(1,\ldots,1)\) line bundle. The constants \(c_1\) and \(c_2\) depend on \(d\) and \(n\), but are independent of other geometric data of \(Z\).

\subsection{Some history of the Bogomolov conjecture}

The Bogomolov conjecture was extensively studied in 1990s, concerns the distribution of rational points on abelian varieties over number fields. The conjecture asserts that rational points exhibit "sparsity" when compared to the density of algebraic points on the variety, offering profound insights into the interplay between arithmetic and geometric structures. 

The Bogomolov conjecture for algebraic tori was proved by S.Zhang \cite{Zhang95a}. Later, Y.Bilu \cite{B97} gives an elegant proof for the algebraic tori case with a critical idea called "equidistribution of small points" introduced by L.Szpiro, E.Ullmo, S.Zhang \cite{SUZ97}. This idea, rooted in arithmetic dynamics and metrized line bundles, demonstrated how Galois orbits of points with sufficiently small heights equidistribute on analytic spaces. Building on these ideas, Bombieri and Zannier \cite{BZ95} established the uniform Bogomolov conjecture for algebraic tori (Theorem \ref{1.1.1}). During the same time, by the work of Amoroso, David, Philippon and Schmidt \cite{AD03}, \cite{AD06}, \cite{DP99}, \cite{S96}, they obtained quantitative lower bounds for the height.

The equidistribution framework also proved instrumental in Zhang’s resolution of the conjecture for abelian varieties \cite{Zha98}. However, extending the uniform Bogomolov conjecture to abelian varieties remained an open problem for years. A central obstacle was the lack of a robust equidistribution theory for families of abelian varieties, where variations in moduli and complex structures introduce technical subtleties. Recent breakthroughs by Lars Kühne \cite{LK21}, which is based on an important height inequality by Dimitrov-Gao-Habegger \cite{DGH21}, overcome this barrier and establish an equidistribution theorem for the abelian scheme.

By integrating these tools with methods from the uniform Mordell-Lang conjecture (Dimitrov-Gao-Habegger \cite{DGH21}, Gao-Ge-Kühne \cite{Gao21}, Kühne \cite{LK21}), they finalized the proof of the uniform Bogomolov conjecture for abelian varieties. Subsequent work by Xinyi Yuan and Shou-Wu Zhang \cite{Yuan21a}, \cite{Yuan21b} has further generalized these results, strengthening the gap principle and providing alternative perspectives.

\subsection{Ideas of our proof}

We begin by developing the notion of \textbf{non-degenerate subvarieties} for a family of algebraic tori. This concept parallels the corresponding theory for abelian varieties established by Habegger \cite{H13}, further developed by Gao--Habegger \cite{GH19}, and extensively studied by Gao \cite{Gao20}. It plays a critical role in the proof of uniform Mordell-Lang conjecture. Our definition serves as a toric analogue, capturing an intrinsic "bigness" property analogous to that of line bundles. Specifically, a subvariety is \textit{non-degenerate} if the associated line bundle induces a nontrivial semi-positive measure on the subvariety.

A key construction of non-degenerate subvarieties utilizes Hilbert schemes. Specifically, consider a universal family of integral subvarieties of an algebraic torus with fixed dimension and degree. By taking sufficiently high fiber products of this family with itself, we obtain a non-degenerate subvariety. This construction provides essential geometric objects for our subsequent analysis.

The concept of non-degeneracy naturally prepares for the application of equidistribution techniques for small points. Following the foundational work of Szpiro--Ullmo--Zhang \cite{SUZ97}, we establish an equidistribution theorem (Theorem \ref{431}) in the context of toric families. This result forms the technical core for our main applications.

Combining these ingredients, we prove the following  theorem for small points on subvarieties of algebraic tori, subsequently deriving Theorem~\ref{1.1.1} via a straightforward induction argument.

\begin{thm}\label{1.1.2}
 Let $r\geq 1$ and $d\geq 1$ be two integers. There exists constants $c_1(r,d,n)>0$ and $c_2(r,d,n)>0$ with the following property: 
 
 For every irreducible subvariety $Z\subset \mathbf{G}_m^n$ satisfying
 
 (1) $\dim Z=r$, $\deg_{\mathcal{O}(1,...,1)}\overline{Z}=d$;

 (2) $Z$ generates $\mathbf{G}_m^n$;

 (3) $\text{Stab}(Z)$ is finite.

There exists a proper closed subvariety $X\subset Z$ with $\deg_{\mathcal{O}(1,...,1)}(\overline{X})< c_1$ such that
$\Sigma:=\{x\in Z(\overline{\mathbb{Q}}): \hat{h}(x)\leq c_2\}$ is contained in $X(\overline{\mathbb{Q}})$.
 \end{thm}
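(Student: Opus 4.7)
The plan is to argue by contradiction, applying the equidistribution theorem (Theorem~\ref{431}) to a high fiber product of a universal family parametrized by the Hilbert scheme. First I would set up the parameter space: fix $r, d, n$ and let $S$ denote the quasi-projective scheme over $\overline{\mathbb{Q}}$ parametrizing irreducible subvarieties $Z \subset \mathbf{G}_m^n$ with $\dim Z = r$ and $\deg_{\mathcal{O}(1,\ldots,1)} \overline{Z} = d$ that generate $\mathbf{G}_m^n$ and have finite stabilizer; this is carved out of the relevant component of the Hilbert scheme of $(\mathbb{P}^1)^n$ by removing the locally closed loci where generation or finiteness of the stabilizer fails. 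Write $\pi : \mathcal{Z} \to S$ for the universal family, tautologically embedded in $S \times \mathbf{G}_m^n$, and for $m \geq 1$ set $\mathcal{Z}^{[m]} := \mathcal{Z} \times_S \cdots \times_S \mathcal{Z} \subset S \times \mathbf{G}_m^{nm}$. By the Hilbert-scheme construction of non-degenerate subvarieties announced in the Ideas section, there exists $m_0 = m_0(r, d, n)$ such that for every $m \geq m_0$ the total space $\mathcal{Z}^{[m]}$ is non-degenerate in the toric sense, i.e.\ the pullback of the standard adelic line bundle from $\mathbf{G}_m^{nm}$ induces a nontrivial semipositive measure on $\mathcal{Z}^{[m]}$; fix such an $m$.

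Next, assume the conclusion fails and extract a Zariski-generic sequence of small points. Under this hypothesis, for every pair of candidate constants there is an irreducible $Z$ satisfying (1)--(3) whose small-point set $\Sigma$ is not contained in any proper closed subvariety of $Z$ of degree smaller than the prescribed $c_1$. A Noetherian-induction argument, stratifying $S$ by the possible Zariski closures of $m$-tuples drawn from such $\Sigma$'s, then produces a sequence of parameters $s_i \in S(\overline{\mathbb{Q}})$ together with points $x_i^{(1)}, \ldots, x_i^{(m)} \in Z_{s_i}(\overline{\mathbb{Q}})$ whose heights tend to zero and whose assembled tuples $\underline{x}_i \in \mathcal{Z}^{[m]}(\overline{\mathbb{Q}})$ form a Zariski-generic sequence in $\mathcal{Z}^{[m]}$.

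Now apply Theorem~\ref{431}: the Galois orbits of $\underline{x}_i$ equidistribute on the associated Berkovich analytic spaces to the canonical measure $\mu$ attached to $\mathcal{Z}^{[m]}$ and the pulled-back adelic line bundle. Non-degeneracy makes $\mu$ nontrivial and semipositive; at the archimedean place the canonical measure on $(\mathbb{P}^1)^{nm}$ is, by Bilu's theorem, the product Haar measure on the unit polytorus, so the support of $\mu$ must be a finite union of torsion cosets of subtori of $\mathbf{G}_m^{nm}$. Combined with the hypotheses that $Z$ generates $\mathbf{G}_m^n$ and has finite stabilizer (which pass to the $m$-fold fiber product), this forces $\mathcal{Z}^{[m]}$ to degenerate in a manner incompatible with the non-degeneracy established above, yielding the contradiction that proves the existence of the constants $c_1(r,d,n)$ and $c_2(r,d,n)$.

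The principal obstacle I expect is the non-degeneracy step: proving that sufficiently high fiber products $\mathcal{Z}^{[m]}$ are non-degenerate is the toric analogue of the bigness results of Habegger and Gao--Habegger and must use in an essential way both the generation hypothesis (to guarantee that $m$-fold orbits span $\mathbf{G}_m^{nm}$) and the finite-stabilizer hypothesis (to rule out degenerate contributions from translates of proper subtori). A secondary subtlety is the extraction of the Zariski-generic sequence of small points: without a priori uniform degree control on the exceptional locus, one must iteratively replace $\mathcal{Z}^{[m]}$ by a constructible subscheme whenever the sequence fails to be generic, a process that terminates only by Noetherian induction on $S$.
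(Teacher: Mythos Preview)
Your setup is right and matches the paper: parametrize by the Hilbert scheme, pass to a high fiber power $\mathcal{Z}^{[m]}$ to force non-degeneracy, and run a Noetherian induction on the base. The extraction of a Zariski-generic small sequence is also fine.

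The gap is in the third paragraph. A single application of Theorem~\ref{431} tells you only that the Galois orbits of your generic small sequence converge to the equilibrium measure $\mu=k\,U_\infty^{\wedge\dim\mathcal{Z}^{[m]}}$ on $\mathcal{Z}^{[m]}$. This is a perfectly consistent statement; nothing about it is contradictory. Your assertion that ``the support of $\mu$ must be a finite union of torsion cosets of subtori of $\mathbf{G}_m^{nm}$'' does not follow. Bilu's theorem gives equidistribution to Haar on $(S^1)^{nm}$ only for sequences that are generic in the full torus $\mathbf{G}_m^{nm}$; your sequence sits inside the proper subvariety $p(\mathcal{Z}^{[m]})$ and is not generic there, so Bilu does not apply in the ambient torus, and the equilibrium measure on the subvariety is simply supported on the fiberwise intersection with the unit polytorus, not on torsion cosets. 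There is no contradiction to be extracted from one equidistribution statement alone.

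What is missing is the Ullmo--Zhang mechanism, which the paper carries out in family form following K\"uhne. One introduces the fiberwise Faltings--Zhang difference map
\[
\alpha:\ X^{[M+2]}\longrightarrow \mathcal{B}^{[u(M+1)]},\qquad (a_0,a_1,\ldots)\longmapsto \bigl(a_0,\,a_1a_0^{-1},\ldots\bigr),
\]
where $X=\mathcal{X}_{r,d}^{[u]}|_{S'}$, and applies Theorem~\ref{432} \emph{twice}: once to $X^{[M+2]}$ (equilibrium measure $\mu_1$) and once to $\alpha(X^{[M+2]})$ (equilibrium measure $\mu_2$). The finite-stabilizer hypothesis is used here, not in the non-degeneracy step: it makes $\alpha$ generically finite for $M$ large, and an explicit analysis of the supports of $\mu_1$ and $\mu_2$ on the unit polytorus then shows $\alpha_*\mu_1\neq\mu_2$. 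Choosing $f_1=f_2\circ\alpha$ with $\int f_1\,d\mu_1\neq\int f_2\,d\mu_2$ and invoking Proposition~\ref{433} for both varieties produces a height lower bound off a proper closed subset, which is what feeds into the Noetherian induction. Without the difference map and the two-measure comparison, the argument does not close.
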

 Here we say a subvariety $Z$ of $\mathbf{G}_m^n$ generates $\mathbf{G}_m^n$ if the smallest torus contains $ZZ^{-1}$ is $\mathbf{G}_m^n$. Equivalently, $Z$ is not contained in the translate of any proper subtorus.

 The proof strategy is to apply the equidistribution theorem twice. Our approach incorporates recent refinements by K\"{u}hne~\cite{LK21} and Gao--Ge--K\"{u}hne~\cite{Gao21}, utilizing uniform equidistribution estimates to achieve the stated uniformity in the constants.

\textbf{Acknowledgement}. The author mainly did this work in his ph.d studying in Leibniz University Hannover. The author received funding from the European Research Council
(ERC) under the European Union’s Horizon 2020 research and innovation program (grant agreement n◦ 945714) during his ph.d studying. I would like to express my gratitude to my supervisor Ziyang Gao for weekly discussion and valuable suggestion for writing. I would like also thanks Professor Thomas Gauthier and Professor Lars K\"{u}hne for pointing out mistakes in the early version of this paper and valuable discussions. 

\section{Non-degenerate subvarieties}\label{nonds}

\subsection{Non-degenerate subvarieties of products of two projective spaces}\label{DN}
Let \(\mathbb{P}^n\) and \(\mathbb{P}^w\) be projective spaces defined over \(\overline{\mathbb{Q}}\), and let \(S \subset \mathbb{P}^w\) be an arbitrary subvariety (not necessarily irreducible). Let \(x = [x_0:\cdots:x_n]\) and \(z = [z_0:\cdots:z_w]\) denote homogeneous coordinates on \(\mathbb{P}^n\) and \(\mathbb{P}^w\), respectively.

Consider a subvariety \(X \subset \mathbb{P}^n \times \mathbb{P}^w\) with projection maps
\[
p: \mathbb{P}^n \times \mathbb{P}^w \to \mathbb{P}^n, \quad \pi: \mathbb{P}^n \times \mathbb{P}^w \to \mathbb{P}^w.
\]
This yields a commutative diagram:
\[
\begin{tikzcd}
X \arrow[d, hook] \arrow[r, hook] & \mathbb{P}^n \times \mathbb{P}^w \arrow[d, "\pi"] \arrow[r, "p"] & \mathbb{P}^n \\
S \arrow[r, hook] & \mathbb{P}^w &
\end{tikzcd}
\]

Let \(L_1 = \mathcal{O}(1)\) denote the hyperplane line bundle on \(\mathbb{P}^n\). Define the logarithmic potentials \(f_l\) and associated \((1,1)\)-forms \(T_l\) for \(l \in \mathbb{Z}_+ \cup \{\infty\}\):
\begin{align}
    f_l &:= \log\left( \frac{|x_0|^l + \cdots + |x_n|^l}{n+1} \right)^{1/l}, \quad l \in \mathbb{Z}_+, \label{3.3.2} \\
    f_\infty &:= \log\left( \max\{ |x_0|, \ldots, |x_n| \} \right), \label{3.3.3} \\
    T_l &:= dd^c f_l. \label{3.3.4}
\end{align}
Here \(T_2\) coincides with the Fubini-Study \((1,1)\)-form on \(\mathbb{P}^n\), while all \(T_l\) represent the first Chern class \(c_1(L_1)\). 

Define the line bundle \(L := p^*L_1 = \mathcal{O}(1,0)\) on \(\mathbb{P}^n \times \mathbb{P}^w\) and its associated \((1,1)\)-forms
\[
U_l := p^*T_l = dd^c f_l.
\]
Clearly, each \(U_l\) represents \(c_1(L)\), and for any \(s \in \mathbb{P}^w\), the restriction \(U_l|_{\pi^{-1}(s)}\) coincides with \(T_l\) under the identification \(\pi^{-1}(s) \cong \mathbb{P}^n\). In particular, \(U_2|_{\pi^{-1}(s)}\) equals the Fubini-Study form.

\begin{defn}\label{331}
The subvariety \(X\) is \emph{non-degenerate over \(S\)} if it satisfies any of the following equivalent conditions:
\begin{enumerate}
    \item[(1)] The line bundle \(L|_X\) is big.
    
    \item[(2)] There exists \(l \in \mathbb{Z}_+ \cup \{\infty\}\) such that \(U_l|_X^{\wedge \dim X} \neq 0\) as a measure on \(X(\mathbb{C})\).
    
    \item[(3)] For all \(l \in \mathbb{Z}_+ \cup \{\infty\}\), the measure \(U_l|_X^{\wedge \dim X}\) is nonzero on \(X(\mathbb{C})\).
    
    \item[(4)] There exists a smooth point \(x \in X^{\mathrm{sm}}(\mathbb{C})\) where the differential 
    \[
    \mathrm{rank}_{\mathbb{C}}(dp|_{X^{\mathrm{sm,an}}})_x = \dim X.
    \]
\end{enumerate}
\end{defn}

\begin{proof}[Proof of equivalence in Definition~\ref{331}]
For \(l \in \mathbb{Z}_+\), the form \(U_l\) represents \(c_1(L)\). Thus 
\[
\int_{X(\mathbb{C})} U_l^{\wedge \dim X} = \deg_L(X).
\]
By Bedford-Taylor theory (Corollary 3.6, \cite{D12}), \(U_l \to U_\infty\) as currents. The equivalence of (1)--(3) follows from the characterization of bigness via volume computations ((2.9), \cite{R.L}). 

To establish (2) \(\Leftrightarrow\) (4), we only need to consider the case \(l=2\).

If (4) is false, then every $x\in X^{\text{sm,an}}$ is a non-isolated point of $p^{-1}(r)\cap X^{\text{sm,an}}$ where $r=p(x)$. As $p^{-1}(r)\simeq \mathbb{P}^{w}$ is a complex analytic subvariety and $X^{\text{sm,an}}$ is complex analytic in a neighborhood of $x$ in $\mathbb
{P}^{n}\times \mathbb{P}^{w}$, there exists an irreducible complex analytic curve $C$ in  $p^{-1}(r)\cap X^{\text{sm,an}}$ passing through $x$. From the definition of $U_2$, $U_2|_{C^{\text{sm}}}\equiv 0$ (because $U_2$ is independent of the last factor). This gives $(U_2|_{X}^{\wedge\text{dim}X})_{x}=0$ for every point $x\in C^{\text{sm}}$. By continuity, it also holds for $x\in C$.
Since $x$ is taken arbitrary, $U_2|_{X^{\text{sm}}}^{\wedge\text{dim}X}\equiv 0$.

Conversely, suppose $U_2|_{X^{\text{sm}}}^{\wedge\text{dim}X}\equiv 0$, we can find an irreducible, 1-dimensional complex analytic subset $C_{x}\subset X^{\text{sm,an}}$ passing through $x$ and $U_2$ vanishes along $C_{x}^{\text{sm}}$. From the definition of $U_2$, it is clear that $p(C_{x})$ is a point. Hence $\text{rank}_{\mathbb{C}}(dp|_{X^{\text{sm,an}}})_x$ is strictly less than $\text{dim}(X)$.

\end{proof}

\subsection{Non-degenerate subvarieties of multi-projective spaces}\label{2.2.2}
The notion of non-degenerate subvarieties extends naturally to products of multiple projective spaces. Let \(X \subset \mathbb{P}^{n_1} \times \cdots \times \mathbb{P}^{n_t} \times \mathbb{P}^w\) be a subvariety over a base \(S \subset \mathbb{P}^w\).

Let \(\sigma: \mathbb{P}^{n_1} \times \cdots \times \mathbb{P}^{n_t} \times \mathbb{P}^w \hookrightarrow \mathbb{P}^N\) denote the Segre embedding. Define projections:
\begin{align}
    p_i&: \mathbb{P}^{n_1} \times \cdots \times \mathbb{P}^{n_t} \times \mathbb{P}^w \to \mathbb{P}^{n_i}, \quad i \in \{1,\ldots,t\}, \\
    \pi&: \mathbb{P}^{n_1} \times \cdots \times \mathbb{P}^{n_t} \times \mathbb{P}^w \to \mathbb{P}^w,
\end{align}
and let \(p: \mathbb{P}^{n_1} \times \cdots \times \mathbb{P}^{n_t} \times \mathbb{P}^w \to \mathbb{P}^N\) be the composition:
\[
\begin{tikzcd}[column sep=small]
(\mathbb{P}^{n_1} \times \cdots \times \mathbb{P}^{n_t}) \times \mathbb{P}^w \arrow[rr, "\sigma \times \mathrm{id}"] && \mathbb{P}^N \times \mathbb{P}^w \arrow[r] & \mathbb{P}^N.
\end{tikzcd}
\]

Equip the space with the line bundle \(L := \mathcal{O}(1,\ldots,1,0)\) and define:
\begin{itemize}
    \item \(U_L\): a \((1,1)\)-form representing \(c_1(L)\)
    \item \(U_\infty\): pullback of \(T_\infty := dd^c \log \max\{|x_0|,\ldots,|x_N|\}\) via \(p\)
\end{itemize}

\begin{defn}\label{333}
The subvariety \(X\) is \emph{non-degenerate over \(S\)} if it satisfies any of:
\begin{enumerate}
    \item[(1)] \(L|_X\) is a big line bundle
    \item[(2)] \((U_L|_{X}^{\wedge \dim X})_x \neq 0\) for some \(x \in X^{\mathrm{sm}}(\mathbb{C})\)
    \item[(3)] \(U_\infty|_{X}^{\wedge \dim X} \neq 0\) as a measure on \(X(\mathbb{C})\)
    \item[(4)] \(\mathrm{rank}_{\mathbb{C}}(dp|_{X^{\mathrm{sm,an}}})_x = \dim X\) for some \(x \in X^{\mathrm{sm}}(\mathbb{C})\)
\end{enumerate}
\end{defn}

\begin{proof}[Proof of equivalence]
Analogous to Definition~\ref{331}.
\end{proof}

Explicitly, let \([x_0^{(i)}:\cdots:x_{n_i}^{(i)}]\) denote homogeneous coordinates on \(\mathbb{P}^{n_i}\). Define \((1,1)\)-forms for \(l \in \mathbb{Z}_+\):
\[
U_l := \sum_{i=1}^t dd^c\left( \log\left( \frac{|x_0^{(i)}|^l + \cdots + |x_{n_i}^{(i)}|^l}{n_i+1} \right)^{1/l} \right),
\]
which represent \(c_1(L)\). The limit case \(U_\infty\) is given by:
\[
U_\infty := \sum_{i=1}^t dd^c\left( \log \max\{ |x_0^{(i)}|, \ldots, |x_{n_i}^{(i)}| \} \right).
\]
By Bedford-Taylor theory, \(U_l \to U_\infty\) as currents when \(l \to \infty\).

\subsection{A criterion for non-degeneracy}
To operationalize Definition~\ref{333}, we introduce degeneracy loci. 

\begin{defn}\label{334}
Let \(X \subset \mathbb{P}^{n_1} \times \cdots \times \mathbb{P}^{n_t} \times \mathbb{P}^w\) be as in \S\ref{2.2.2}.
The \emph{degeneracy locus} of \(X\) over \(S\) is defined as:
\[
X^{\mathrm{deg}} := \bigcup_{\substack{\{t\} \times C \subset X}} (\{t\} \times C),
\]
where $t\in \mathbb{P}^{n_1}\times...\times \mathbb{P}^{n_t}$ and $C$ runs over curves in $\mathbb{P}^{w}$ that satisfy $\{t\}\times C\subset X$.
\end{defn}

\begin{prop}\label{335}
If \(X \setminus X^{\mathrm{deg}}\) contains a nonempty analytic open subset, then \(X\) is non-degenerate.
\end{prop}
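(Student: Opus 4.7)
My plan is to prove the contrapositive: if $X$ fails to be non-degenerate, then $X^{\mathrm{deg}}=X$, so $X\setminus X^{\mathrm{deg}}$ cannot contain any nonempty analytic open subset. The relevant characterization is condition (4) of Definition~\ref{333}, which states that $X$ is non-degenerate iff some smooth point $x\in X^{\mathrm{sm}}(\mathbb{C})$ satisfies $\mathrm{rank}_{\mathbb{C}}(dp|_{X^{\mathrm{sm,an}}})_x=\dim X$.

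The first step is to reinterpret $X^{\mathrm{deg}}$ as the locus
\[
F := \{x \in X : \dim_x (p|_X)^{-1}(p(x)) \geq 1\}.
\]
The containment $X^{\mathrm{deg}}\subseteq F$ is immediate from Definition~\ref{334}. For the reverse inclusion, any positive-dimensional local fiber component through $x$ lies in $\{t\}\times\mathbb{P}^w$ and so has the form $\{t\}\times Y'$ with $Y'\subset\mathbb{P}^w$ of positive dimension; iterated generic hyperplane sections of $Y'$ through the $\mathbb{P}^w$-coordinate of $x$ produce an algebraic curve $C\subset Y'$ with $\{t\}\times C\subset X$ passing through $x$, so $x\in X^{\mathrm{deg}}$. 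Chevalley's upper semi-continuity of fiber dimension then shows that $F$, and hence $X^{\mathrm{deg}}$, is Zariski closed in $X$.

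Next, assuming (4) fails, the rank function $x\mapsto\mathrm{rank}_{\mathbb{C}}(dp|_{X^{\mathrm{sm,an}}})_x$ on $X^{\mathrm{sm}}$ has maximum $r_{\max}<\dim X$. By lower semi-continuity of rank, this maximum is attained on a nonempty Zariski-open subset $U\subseteq X^{\mathrm{sm}}$, which is Zariski dense in $X$ (assuming $X$ irreducible; otherwise one argues component by component on the top-dimensional components). The complex-analytic constant rank theorem then provides, for each $u\in U$, an analytic submanifold through $u$ of dimension $\dim X-r_{\max}\geq 1$ contained in the fiber $(p|_X)^{-1}(p(u))$. Locally, this analytic fiber coincides with the algebraic fiber, so $U\subseteq F=X^{\mathrm{deg}}$. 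Zariski density of $U$ together with closedness of $X^{\mathrm{deg}}$ then forces $X^{\mathrm{deg}}=X$, giving the desired contradiction.

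The main technical point is the bridge between the complex-analytic constant rank theorem on $U$ and Chevalley's algebraic upper semi-continuity of fiber dimension: one has to check that the analytic local fibers produced by the rank theorem really do witness algebraically positive-dimensional fibers, and that the locus $F$ defined via algebraic local fiber dimension is genuinely Zariski closed. Both facts are standard, but they are where I expect to spend the most care; once they are set up, the rest of the argument is formal.
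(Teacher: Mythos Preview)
Your argument is correct, but it is considerably more elaborate than the paper's. The paper proceeds directly rather than by contrapositive: it simply takes the given analytic open $U\subset X\setminus X^{\mathrm{deg}}$, observes that for any $x\in U$ the fiber $p^{-1}(p(x))\cap X$ contains no curve through $x$ (by the very definition of $X^{\mathrm{deg}}$), and concludes that condition~(4) of Definition~\ref{333} holds at a smooth point of $U$. That is the entire proof---two lines.

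Your route via the contrapositive is logically equivalent and has the merit of establishing, as a byproduct, that $X^{\mathrm{deg}}$ coincides with the Chevalley locus $F$ and is therefore Zariski closed; this is a useful structural fact not stated in the paper. The cost is that you invoke Chevalley semicontinuity, the constant rank theorem, and a density-plus-closedness argument, none of which the direct proof needs. In particular, the identification $X^{\mathrm{deg}}=F$ and its closedness are not required for the proposition as stated: once you have an analytic open $U$ disjoint from $X^{\mathrm{deg}}$, the direct verification of~(4) at a generic smooth point of $U$ is immediate, since a holomorphic map from a connected manifold with everywhere discrete fibers has full rank on a dense open set.
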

\begin{proof}
Let \(U \subset X \setminus X^{\mathrm{deg}}\) be such an open set. For any \(x \in U\), the fiber \(p^{-1}(p(x)) \cap X\) contains no curves by construction. Thus \(\mathrm{rank}_{\mathbb{C}}(dp|_{X^{\mathrm{sm,an}}})_x = \dim X\). 
\end{proof}

\begin{prop}\label{nonc}
Let \(X \subset \mathbb{P}^{n_1} \times \mathbb{P}^w\) and \(Y \subset \mathbb{P}^{n_2} \times \mathbb{P}^w\) be irreducible \(S\)-subvarieties with dominant projections to \(S\subset \mathbb{P}^{w}\). If \(X\) is non-degenerate, then \(X \times_S Y \subset \mathbb{P}^{n_1} \times \mathbb{P}^{n_2} \times \mathbb{P}^w\) is non-degenerate over \(S\).
\end{prop}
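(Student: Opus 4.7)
The plan is to verify criterion (4) of Definition~\ref{333} for $W := X \times_S Y$ by exhibiting a single smooth point at which the differential of the product-of-projections $p$ has rank $\dim W$.

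Starting from the non-degeneracy of $X$: by Definition~\ref{331}(4), the locus $U_X \subseteq X^{\mathrm{sm}}$ where the differential of the projection $X \to \mathbb{P}^{n_1}$ has full rank $\dim X$ is a nonempty Zariski-open subset. Generic smoothness in characteristic zero then yields a dense open $V \subseteq S^{\mathrm{sm}}$ over which both $X \to S$ and $Y \to S$ are smooth; since $X \to S$ is dominant, the preimage of $V$ meets $U_X$ in a nonempty open set, from which I pick $(x_0, s_0)$, and then pick any $y_0$ in the (necessarily smooth) fiber $Y_{s_0}$. Because $X \to S$ is smooth at $(x_0, s_0)$, its base change $W \to Y$ is smooth at $(x_0, y_0, s_0)$; combined with smoothness of $Y$ at $(y_0, s_0)$, this shows $(x_0, y_0, s_0)$ is a smooth point of $W$ of local dimension $\dim X + \dim Y - \dim S$, with tangent space
\[
T_{(x_0,y_0,s_0)} W = \bigl\{(u,v,w) : (u,w) \in T_{(x_0,s_0)} X,\ (v,w) \in T_{(y_0,s_0)} Y\bigr\}.
\]

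The decisive computation is then immediate: $dp$ sends $(u,v,w) \mapsto (u,v)$ (the Segre embedding is a closed immersion, so passing to $\mathbb{P}^{n_1}\times\mathbb{P}^{n_2}$ does not change the rank), and its kernel on $T_{(x_0,y_0,s_0)} W$ consists of vectors $(0,0,w)$ with $(0,w) \in T_{(x_0,s_0)} X$. The defining property of $U_X$ forces $w = 0$, so $dp$ has trivial kernel and therefore rank equal to $\dim W$, confirming Definition~\ref{333}(4). The main bookkeeping obstacle is confirming genuine smoothness of $W$ (and hence the expected dimension of its tangent space) at the chosen point; once the generic-smoothness step is in place, the remaining linear algebra is routine, and notably the hypothesis on $Y$ is used only to guarantee that $V$-fibers of $Y$ are nonempty and smooth, not any bigness property.
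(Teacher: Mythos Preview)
Your proof is correct and follows essentially the same approach as the paper: both verify criterion~(4) of Definition~\ref{333} by invoking generic smoothness to work over a smooth open $V \subset S$, picking a point $x_0$ where $dp_1|_X$ has full rank $\dim X$ (from the non-degeneracy of $X$), adjoining a generic $y_0 \in Y_{s_0}$, and then checking that the combined projection has rank $\dim X + \dim Y - \dim S$ at $(x_0,y_0,s_0)$. The only cosmetic difference is that the paper adds ranks ($\dim X$ from the $X$-side plus $\dim Y_s = \dim Y - \dim S$ from the fiber direction in $Y$), whereas you compute the kernel of $dp$ on the fiber-product tangent space directly and show it is trivial; these are equivalent linear-algebra bookkeepings of the same fact.
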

\begin{proof}
Denote by
\begin{itemize}
    \item $\pi:\mathbb{P}^{n_1}\times \mathbb{P}^{n_2}\times \mathbb{P}^{w}\rightarrow \mathbb{P}^{n_1}\times\mathbb{P}^{n_2}$: the projection to first two factors;
    \item $p_{1}:\mathbb{P}^{n_1}\times \mathbb{P}^{w} \rightarrow \mathbb{P}^{n_1}$: the projection map to $\mathbb{P}^{n_1}$;
    \item $p_{2}: \mathbb{P}^{n_2}\times \mathbb{P}^{w} \rightarrow \mathbb{P}^{n_2}$: the projection map to $\mathbb{P}^{n_2}$.
\end{itemize}
 
By generic smoothness, We may assume $S$ is smooth, and $\pi_{X}:X\rightarrow S$ and $\pi_{Y}:Y\rightarrow S$ are smooth morphisms.
The proposition follows from the calculation of fiber dimension:

We have $\dim X\times_{S} Y=\dim X+\dim Y-\dim S$.  Non-degeneracy of \(X\) provides \(x \in X^{\mathrm{sm}}(\mathbb{C})\) with \(\mathrm{rank}_{\mathbb{C}}(dp|_{X^{\mathrm{sm,an}}})_x = \dim X\). Let \(s=\pi(x)\).

As $\pi|_{\pi^{-1}(s)}$ is an isomorphism to $\mathbb{P}^{n_1}\times\mathbb{P}^{n_2}$, we have 
$$\text{rank}_{\mathbb{C}}(dp|_{Y^{\text{sm,an}}})_{y}\geq \dim Y_{s}=\dim Y-\dim S$$
for generic $y\in Y_{s}^{\text{sm}}(\mathbb{C})$. Then $y\in Y^{\text{sm}}(\mathbb{C})$ as $S$ is smooth and $Y^{\text{sm}}\rightarrow S$ is smooth.

Take $(x,y)\in (X\times_{S} Y)$ above, then $\text{rank}_{\mathbb{C}}(d(p_{X}\times_{S} p_{Y})|_{(X\times_{S}Y)^{\text{sm,an}}})_{(x,y)}=\dim X+\dim Y-\dim S$. Hence $X\times_{S} Y$ is non-degenerate.
\end{proof}

\subsection{Construction of a non-degenerate subvariety}\label{CND}

In this section, we construct a key example of non-degenerate subvarieties by means of Hilbert schemes.

Fix integers \( r,d \geq 1 \) and \( n \geq 1 \). Let \( \mathbf{P} := (\mathbb{P}^1)^n \) serve as our ambient variety. We identify the algebraic torus \( \mathbf{G}_m^n \) with a subvariety of \( \mathbf{P} \) via the immersion:
\[
\phi: \mathbf{G}_m^n \hookrightarrow \mathbf{P}, \quad [x_1,\ldots,x_n] \mapsto ([1:x_1],\ldots,[1:x_n]).
\]
This particular compactification is chosen due to technical advantages in describing equilibrium measures in following sections.

Fix integers $r,d\geq 1$, $n\geq 1$. Let $\mathbf{P}:=(\mathbb{P}^1)^n$. 
Identifying $\mathbf{G}_m^n$ as a subvariety of $\mathbf{P}$ by $\phi:\mathbf{G}_m^n \rightarrow \mathbf{P}$, where
\begin{equation}
    \phi([x_1,...,x_n]):= ([1:x_1],\cdot\cdot\cdot,[1:x_n]).
\end{equation}
The reason we take $\mathbf{P}$ as the compactification of $\mathbf{G}_m^n$ rather than $\mathbb{P}^n$ is for purely technical reason. Specifically, it is easier to give an explicit description of the equilibrium measure for this case.

Let \( L := \mathcal{O}(1,\ldots,1) \) denote the standard ample line bundle on \( \mathbf{P} \). By Grothendieck's fundamental results on Hilbert schemes (see \cite{Gro62}, Theorem 2.1(b) and Lemma 2.4), there are finitely many possibilities of Hilbert polynomials  corresponding to irreducible \( r \)-dimensional subvarieties of \( \mathbf{P} \) with degree \( d \) relative to \( L \). Let \(\Lambda\) be this finite set of polynomials.

By results of Grothendieck (see \cite{Gro62}, Thm 2.1(b) and Lem 2.4), there are finitely many possibilities for the Hilbert polynomials of irreducible subvarieties of $\mathbf{P}$ with dimension $r$ and degree $d$ respect to $L$. 

 We define the parameter space:
\[
\mathbf{H}_{r,d}(n) := \bigcup_{F \in \Lambda} \mathbf{H}_F(n),
\]
where \( \mathbf{H}_F(n) \) denotes the Hilbert scheme parametrizing closed subschemes of \( \mathbf{P} \) with Hilbert polynomial \( F \). This scheme is projective and of finite type over \( \overline{\mathbb{Q}} \) by \cite{Gro62} theorem 3.2.

Consider the universal family \( \mathcal{Y}_{r,d} \rightarrow \mathbf{H}_{r,d}(n) \) with its natural closed immersion:
\[
\begin{tikzcd}
\mathcal{Y}_{r,d} \arrow[r, hook] \arrow[dr] & \mathbf{P} \times \mathbf{H}_{r,d}(n) \arrow[d,"\pi"] \\
& \mathbf{H}_{r,d}(n),
\end{tikzcd}
\]
where \( \pi \) denotes the projection. For each \( s \in \mathbf{H}_{r,d}(n)(\overline{\mathbb{Q}}) \), the fiber \( (\mathcal{Y}_{r,d})_s \) corresponds to the subscheme parametrized by \( s \).

Define the quasi-projective subvariety:
\[
\mathbf{H}_{r,d}(n)^\circ := \left\{ s \in \mathbf{H}_{r,d}(n) \mid (\mathcal{Y}_{r,d})_s \text{ is an integral subscheme of } \mathbf{P} \right\}_{\text{red}},
\]
which parametrizes integral subvarieties of dimension \( r \) and degree \( d \). The restricted universal family becomes:
\[
\mathcal{Y}_{r,d}^\circ := \mathcal{Y}_{r,d} \cap (\mathbf{P} \times \mathbf{H}_{r,d}(n)^\circ).
\]
Consider 
\begin{equation}
    \mathbf{H}_{r,d}(n)^{\circ}:=\{s\in \mathbf{H}_{r,d}(n): (\mathcal{Y}_{r,d})_{s} \text{\ is an integral subscheme of\ } \mathbf{P}\}
\end{equation}
endowed with reduced subsheme structure. 

It is a quasi-projective subvariety of $\mathbf{H}_{r,d}(n)$ defined over $\overline{\mathbb{Q}}$ which parametrize all integral subvarieties of $\mathbf{P}$ with dimension $r$ and degree $d$. 

Using the irreducibility of \( \mathbf{P} \),  we establish a bijection between irreducible closed subsets of \( \mathbf{G}_m^n \) and those of \( \mathbf{P} \) intersecting \( \mathbf{G}_m^n \). This yields a Zariski open \( \mathbf{H} \subset \mathbf{H}_{r,d}(n)^\circ \) parametrizing integral subschemes \( Z \subset \mathbf{G}_m^n \) with \( \dim(Z) = r \) and \( \deg_L \overline{Z} = d \). It is clear that \( \mathbf{H} \subset \mathbb{P}^w \) is a quasi-projective subvariety. We define our key object:
\[
\mathcal{X}_{r,d} := \mathcal{Y}_{r,d}^\circ \cap (\mathbf{G}_m^n \times \mathbf{H}),
\]
which serves as the universal family over \( \mathbf{H} \), i.e. for any $s\in \mathbf{H}$, $(\mathcal{X}_{r,d})_{s}$ is the integral subvariety corresponding to $s$.

Let $p:\mathbf{P}\times \mathbb{P}^{w} \rightarrow \mathbb{P}^{w}$ and $\pi:\mathbf{P}\times \mathbb{P}^{w} \rightarrow \mathbf{P}$ be projections. For any integral subscheme $Z$ of $\mathbf{G}_{m}^n$, denote by $[Z]$ the corresponding point in $\mathbf{H}$.

\begin{prop}\label{lem1}
Let $V$ be a subvariety of $\mathbf{H}$. There exists finitely many points $P_1,...,P_{u}$ in $\mathbf{G}_{m}^n(\overline{\mathbb{Q}})$ such that the Zariski closed subset of $V$ defined by 
\begin{equation}
    \{[Z]\in V: P_1\in Z(\overline{\mathbb{Q}}), ... , P_{u}\in Z(\overline{\mathbb{Q}})\}
\end{equation}

  is a non-empty finite set.
\end{prop}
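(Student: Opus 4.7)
The plan is to argue by induction on $\dim V$. The base case $\dim V = 0$ is immediate, since $V$ is then already a finite scheme and the empty collection of points suffices. For the inductive step with $\dim V \geq 1$, I would reduce everything to finding a single point $P_1 \in \mathbf{G}_m^n(\overline{\mathbb{Q}})$ such that
\[
V_{P_1} := \{[Z] \in V : P_1 \in Z(\overline{\mathbb{Q}})\}
\]
is a non-empty Zariski closed subset of $V$ with $\dim V_{P_1} < \dim V$; applying the inductive hypothesis to $V_{P_1}$ would then yield the remaining points $P_2, \ldots, P_u$, and the chain of inclusions would give a non-empty finite intersection. Closedness of $V_{P_1}$ inside $V$ follows directly from the restricted universal family $\mathcal{X}_V \to V$: one has $V_{P_1} = \pi_V(\mathcal{X}_V \cap (V \times \{P_1\}))$, which is closed since it is cut out by the fiber condition in a proper family.

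To construct $P_1$, I would enumerate the top-dimensional irreducible components of $V$ as $V^{(1)}, \ldots, V^{(m)}$ (finite in number since $\mathbf{H}$ is of finite type), fix an arbitrary closed point $[Z^{(1)}] \in V^{(1)}$, and introduce the auxiliary base loci
\[
B^{(i)} := \bigcap_{[Z] \in V^{(i)}} Z \subset \mathbf{G}_m^n, \qquad i = 1, \ldots, m.
\]
The candidate for $P_1$ is then any $\overline{\mathbb{Q}}$-point of $Z^{(1)} \setminus \bigcup_{i=1}^m B^{(i)}$. Existence rests on the central algebro-geometric claim that $Z^{(1)} \not\subset B^{(i)}$ for every $i$: each $[Z] \in V^{(i)}$ corresponds to an $r$-dimensional integral subvariety, so any inclusion $Z^{(1)} \subset Z$ forces equality $Z = Z^{(1)}$ by comparing dimensions and invoking irreducibility; hence $Z^{(1)} \subset B^{(i)}$ would force $V^{(i)} = \{[Z^{(1)}]\}$, contradicting $\dim V^{(i)} = \dim V \geq 1$. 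This non-inclusion makes each $Z^{(1)} \cap B^{(i)}$ a proper closed subset of the positive-dimensional irreducible variety $Z^{(1)}$ (using $r \geq 1$), and a $\overline{\mathbb{Q}}$-point $P_1$ in the complement of the finite union exists.

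Verification then takes only a few lines: $P_1 \in Z^{(1)}$ gives $[Z^{(1)}] \in V_{P_1}$, so $V_{P_1}$ is non-empty; and $P_1 \notin B^{(i)}$ means some $[Z] \in V^{(i)}$ avoids $P_1$, so $V^{(i)} \cap V_{P_1}$ is a proper closed subset of the irreducible $V^{(i)}$ and has dimension strictly less than $\dim V$. Combined with the trivial bound on non-top-dimensional components, this gives $\dim V_{P_1} < \dim V$, closing the induction.

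The main obstacle I expect is handling reducible $V$: one has to choose a single $P_1$ that shrinks every top-dimensional component simultaneously, and the finite family of base loci $B^{(i)}$ together with the non-inclusion $Z^{(1)} \not\subset B^{(i)}$ are introduced precisely to arrange this all at once. Once that algebro-geometric step is in place, the rest is a routine Noetherian descent on dimension.
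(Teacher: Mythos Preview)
Your proposal is correct and follows essentially the same approach as the paper: induct on $\dim V$, using that two distinct integral $r$-dimensional subvarieties cannot be nested to find membership conditions that drop the dimension. The paper's only cosmetic differences are that it fixes a single reference $[X_0]\in V$ at the outset and draws all points from $X_0(\overline{\mathbb{Q}})$ (so non-emptiness is immediate at every stage), and it allows one new point per positive-dimensional component at each step rather than your single point chosen to avoid all the base loci $B^{(i)}$ simultaneously.
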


\begin{proof}
Fix $[X_0]\in V(\overline{\mathbb{Q}})$, we prove the following result by induction:

For each integer $0\leq k\leq \text{dim}(V)$, there are finitely many points  \( P_{k,1},\ldots,P_{k,n_k} \in X_0(\overline{\mathbb{Q}}) \)  such that \[
V_k := \left\{ [Z] \in V(\overline{\mathbb{Q}}) \mid P_{k,i} \in Z(\overline{\mathbb{Q}}) \right\}
\]
has dimension \( \leq \dim(V) - k \). 

We notice that the lemma follows by taking $k=\text{dim}(V)$. 

The base case \( k=0 \) is trivial. Assume the result is true for integers less than $k$. Take  \( P_{k,1},\ldots,P_{k-1,n_{k-1}} \in X_0(\overline{\mathbb{Q}}) \) such that 
\begin{equation}
    V_{k-1}:=\{[Z]\in V(\overline{\mathbb{Q}}):P_{k-1,n_i}\in Z(\overline{\mathbb{Q}}) \text{\ for all\ }i=1,...,n_k-1\}
\end{equation}
has dimension $\leq \text{dim}(V)-k+1$. If \( \dim(V_{k-1})\leq \dim(V) - k \), we already done. If not,
for each irreducible component $W$ of $V_{k-1}$ with $\text{dim}(W)>0$, there exist $[Z_{W}]\in W(\overline{\mathbb{Q}})$ such that $Z_{W}\neq X_{0}$ as irreducible subvarieties of $\mathbf{G}_{m}^n$. Take $P_{W}\in (X_{0}\backslash{Z_{W}}) (\overline{\mathbb{Q}})$. Then $\{[Z]\in W(\overline{\mathbb{Q}}): P_{W}\in Z(\overline{\mathbb{Q}})\}$ has dimension $\leq \text{dim}(W)-1\leq \text{dim}(V)-k+1-1=\text{dim}(V)-k$.
Thus it suffices to take $\{P_{k,1},...,P_{k,n_{k}}\}:=\{P_{k-1,1},...,P_{k-1,n_{k-1}}\}\bigcup (\bigcup_{W}\{P_{W}\})$, where $W$ runs over all positive dimensional irreducible components of $V_{k-1}$. In addition, this does not lead to a empty set as all $P_i$'s are contained in $X_0$.
\end{proof}

Let $u$ be a positive integer, define the \( u \)-fold fiber product:
\[
\mathcal{X}_{r,d}^{[u]} := \mathcal{X}_{r,d} \times_{\mathbf{H}} \cdots \times_{\mathbf{H}} \mathcal{X}_{r,d},
\]
with associated commutative diagram:
\[
\begin{tikzcd}
\mathcal{X}_{r,d}^{[u]} \arrow[r, hook] \arrow[dr] & (\mathbf{G}_m^n)^u \times \mathbf{H} \arrow[d,"\pi"] \arrow[r,"p^{[u]}"] & (\mathbf{G}_m^n)^u \\
& \mathbf{H}
\end{tikzcd}
\]

\begin{rem}\label{342}
Notice that if we take $S$ to be an irreducible subvariety of $\mathbf{H}$, then $\mathcal{X}_{r,d}^{[u]}|_{S}$ is also irreducible as each fiber is irreducible.
\end{rem}
\begin{prop}\label{lem2}
For any reduced subscheme \( S \subset \mathbf{H} \), there exists \( u_0 \geq 1 \) such that for all \( u \geq u_0 \), there exists \( P \in (\mathbf{G}_m^n)^u(\overline{\mathbb{Q}}) \) making \( (p^{[u]}|_{\mathcal{X}_{r,d}^{[u]} \times_{\mathbf{H}} S})^{-1}(P) \) finite and non-empty.
\end{prop}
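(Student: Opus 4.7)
The plan is to deduce Proposition~\ref{lem2} from Proposition~\ref{lem1} together with a simple trick for enlarging a finite configuration of points without destroying non-emptiness of the fiber.

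First I would unravel the fiber. By the universal property of the $u$-fold fiber product, a $\overline{\mathbb{Q}}$-point of $\mathcal{X}_{r,d}^{[u]}\times_{\mathbf{H}} S$ is a tuple $(x_1,\ldots,x_u,[Z])$ with $[Z]\in S$ and $x_1,\ldots,x_u\in Z$. Hence the fiber of $p^{[u]}|_{\mathcal{X}_{r,d}^{[u]}\times_{\mathbf{H}} S}$ above a point $P=(P_1,\ldots,P_u)\in (\mathbf{G}_m^n)^u(\overline{\mathbb{Q}})$ is set-theoretically
\[
\Sigma_P:=\{[Z]\in S(\overline{\mathbb{Q}}) : P_1,\ldots,P_u\in Z(\overline{\mathbb{Q}})\}.
\]
Thus the task reduces to producing, for every $u\geq u_0$, a tuple $P$ with $\Sigma_P$ finite and non-empty.

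Next I would apply Proposition~\ref{lem1} to $V=S$. The reduced subscheme $S$ is a subvariety of $\mathbf{H}$, and the inductive argument proving Proposition~\ref{lem1} treats each positive-dimensional irreducible component of the intermediate loci separately, so reducibility of $S$ is not an obstruction. This produces an integer $u_0\geq 1$ and points $P_1,\ldots,P_{u_0}\in \mathbf{G}_m^n(\overline{\mathbb{Q}})$ such that $\Sigma_0:=\Sigma_{(P_1,\ldots,P_{u_0})}$ is finite and non-empty. Pick any $[Z_\ast]\in\Sigma_0$; then $Z_\ast$ is an integral $r$-dimensional subvariety of $\mathbf{G}_m^n$ containing $P_1,\ldots,P_{u_0}$, and in particular $Z_\ast(\overline{\mathbb{Q}})$ is infinite since $r\geq 1$.

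Finally, for any $u\geq u_0$ I would complete the configuration by choosing arbitrary additional points $P_{u_0+1},\ldots,P_u\in Z_\ast(\overline{\mathbb{Q}})$ and setting $P:=(P_1,\ldots,P_u)$. By construction $\Sigma_P\subset \Sigma_0$ is finite, while $[Z_\ast]\in\Sigma_P$ ensures it is non-empty. The real substance of the proposition is already packaged inside Proposition~\ref{lem1}; the only mild subtlety is preserving non-emptiness as $u$ grows, and confining the extra points to the fixed witness $Z_\ast$ takes care of it, so I expect no serious obstacle in this final step.
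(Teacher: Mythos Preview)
Your proposal is correct and follows essentially the same route as the paper. The only cosmetic difference is how the tuple is extended beyond $u_0$: the paper simply repeats the last point $P_{u_0}$ (which already lies in the fixed witness $X_0$ from the proof of Proposition~\ref{lem1}), whereas you pick arbitrary new points inside a witness $Z_\ast\in\Sigma_0$; both devices preserve non-emptiness for the same reason.
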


\begin{proof}
Let $P_1,\ldots,P_{u}\in \mathbf{G}_{m}^n(\overline{\mathbb{Q}})$ that was constructed in proposition \ref{lem1}. For each $u\geq u_0$, let $P_{k}=P_{u_0}$ for all $k\geq u_0$.
Let $$P=(P_1,\ldots,P_u)\in (\mathbf{G}_{m}^n)^{u}(\overline{\mathbb{Q}}).$$

Then $(p^{[u]}|_{\mathcal{X}_{r,d}^{[u]}\times_{\mathbf{H}} S})^{-1}(P)$ is precisely the subset 
$$\{(P,[X])\in (\mathcal{X}_{r,d}^{[u]}\times_{\mathbf{H}}S)(\overline{\mathbb{Q}}): P_{i}\in Z(\overline{\mathbb{Q}}) \text{\ for each\ } i=1,\ldots, u\}.$$

This is a finite subset by proposition \ref{lem1}.
\end{proof}

\begin{prop}\label{344}
For any \( S \subset \mathbf{H} \), there exists \( u_0 \geq 0 \) such that \( \mathcal{X}_{r,d}^{[u]}|_S \setminus (\mathcal{X}_{r,d}^{[u]}|_S)^{\text{deg}} \) contains a non-empty Zariski open subset when \( u \geq u_0 \).
\end{prop}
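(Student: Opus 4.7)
The plan is to deduce Proposition~\ref{344} directly from Proposition~\ref{lem2} by translating the existence of a finite fiber of $p^{[u]}$ into a non-empty Zariski open subset lying outside the degeneracy locus. The crucial observation, which I would verify first from Definition~\ref{334}, is that a point $y \in \mathcal{X}_{r,d}^{[u]}|_S$ lies in $(\mathcal{X}_{r,d}^{[u]}|_S)^{\text{deg}}$ if and only if the fiber of $p^{[u]}|_{\mathcal{X}_{r,d}^{[u]}|_S}$ through $y$ contains a curve passing through $y$; equivalently, if and only if that fiber has positive local dimension at $y$.

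With this reformulation in hand, I would proceed as follows. First, apply Proposition~\ref{lem2} to produce, for every $u \geq u_0$, a point $P \in (\mathbf{G}_m^n)^u(\overline{\mathbb{Q}})$ such that $f^{-1}(P)$ is finite and non-empty, where I abbreviate $f := p^{[u]}|_{\mathcal{X}_{r,d}^{[u]}|_S}$. Any $y \in f^{-1}(P)$ is then isolated in its fiber, so $\dim_y f^{-1}(f(y)) = 0$. Second, invoke Chevalley's upper semicontinuity theorem for fiber dimension — a statement on the source of $f$ requiring no properness — to conclude that
\[
U := \bigl\{\, y' \in \mathcal{X}_{r,d}^{[u]}|_S : \dim_{y'} f^{-1}(f(y')) = 0 \,\bigr\}
\]
is Zariski open and non-empty (it contains $y$). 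By the opening observation, $U$ is disjoint from $(\mathcal{X}_{r,d}^{[u]}|_S)^{\text{deg}}$, yielding the desired non-empty Zariski open subset of the complement.

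The only step requiring genuine care is the equivalence asserted at the start: that a fiber of positive local dimension at $y$ forces $y$ to lie on a curve contained in that fiber. I would handle this by restricting to a positive-dimensional irreducible component of the fiber through $y$ and cutting by a sequence of generic hyperplane sections through $y$ to produce an irreducible curve in the fiber passing through $y$ — a standard argument. Beyond this verification, I do not anticipate any serious obstacle: all of the substantive geometric content has already been absorbed into Propositions~\ref{lem1} and~\ref{lem2}, and Proposition~\ref{344} is essentially a formal consequence of them together with upper semicontinuity of fiber dimension.
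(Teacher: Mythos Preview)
Your proposal is correct and follows essentially the same route as the paper: both invoke Proposition~\ref{lem2} to produce a point $P$ with finite non-empty fiber, then apply Chevalley's upper semicontinuity of fiber dimension to obtain a non-empty Zariski open subset, and finally observe that this open set is disjoint from the degeneracy locus. Your formulation in terms of local fiber dimension at a point is slightly more precise than the paper's phrasing, and your explicit verification of the equivalence between positive local fiber dimension and membership in the degeneracy locus fills in a step the paper leaves implicit, but the underlying argument is the same.
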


\begin{proof}
Let $V=S$. Take $u_0$ as in proposition \ref{lem2}, there exists $P=(P_i)_{i=1}^{u}\in  (\mathbf{G}_{m}^n)^{u}(\overline{\mathbb{Q}})$ such that $(p^{[u]}|_{\mathcal{X}_{r,d}^{[u]}})^{-1}(P)$ is non-empty and finite. As
$(p^{[u]}|_{\mathcal{X}_{r,d}^{[u]}})^{-1}(P)$ is precisely 
$$\{(P,[Z])\in \mathcal{X}_{r,d}^{[u]}(\overline{\mathbb{Q}}): P_{i}\in Z(\overline{\mathbb{Q}}) \text{\ for each\ } i=1,\ldots, u\},$$

there are only finitely many points $[Z_1],\ldots, [Z_{l}]\in S $ with the property $$P_{i}\in Z(\overline{\mathbb{Q}}),\text{\ for\ }i=1,...,u.$$

However if $(P,[Z])\in (\mathcal{X}_{r,d}^{[u]}|_{S})^{\text{deg}}(\overline{\mathbb{Q}})$, then there must exists a curve $C\subset S$ such that for each $[Y]\in C$, $(P,[Y])\in \mathcal{X}_{r,d}^{[u]}(\overline{\mathbb{Q}})$. This is impossible by the definition of $P$.
This implies that no point in $(\mathcal{X}_{r,d}^{[u]}|_{S})^{\text{deg}}(\overline{\mathbb{Q}})$ has its projection to be $P\in (\mathbf{G}_{m}^n)^{u}(\overline{\mathbb{Q}})$. 

By Chevalley's theorem (\cite{Gro62} theorem 13.1.3) on semicontinuous fiber dimension, 
$$\{(P,[Z])\in (\mathcal{X}_{r,d}^{[u]}|_{S})(\overline{\mathbb{Q}}):(p^{[u]}|_{\mathcal{X}_{r,d}^{[u]}})^{-1}(P) \text{\ is finite}\} $$
is a nonempty Zariski open subset, which is disjoint from the degeneracy locus $(\mathcal{X}_{r,d}^{[u]}|_{S})^{\text{deg}}$.
\end{proof}

\begin{cor}\label{345}
\( \mathcal{X}_{r,d}^{[u]}|_S \) becomes non-degenerate when \( u \gg 0 \).
\end{cor}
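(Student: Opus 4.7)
The corollary should follow almost immediately by combining the two key results already in hand: Proposition~\ref{344} produces a nonempty Zariski open subset of $\mathcal{X}_{r,d}^{[u]}|_S$ disjoint from the degeneracy locus $(\mathcal{X}_{r,d}^{[u]}|_S)^{\mathrm{deg}}$ for all $u \geq u_0$, and Proposition~\ref{335} states that whenever a subvariety $X$ has the property that $X \setminus X^{\mathrm{deg}}$ contains a nonempty analytic open subset, then $X$ is non-degenerate in the sense of Definition~\ref{333}.

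My plan is therefore a one-line deduction. First, I would invoke Proposition~\ref{344} to fix $u_0 \geq 0$ such that for every $u \geq u_0$ the complement $\mathcal{X}_{r,d}^{[u]}|_S \setminus (\mathcal{X}_{r,d}^{[u]}|_S)^{\mathrm{deg}}$ contains a nonempty Zariski open subset $U$. Since any nonempty Zariski open subset of a variety over $\overline{\mathbb{Q}}$ (viewed inside the ambient multi-projective space, which is what the definitions of $X^{\mathrm{deg}}$ and of non-degeneracy refer to) is in particular a nonempty analytic open subset of the complex points, the hypothesis of Proposition~\ref{335} is satisfied. Applying that proposition directly yields non-degeneracy of $\mathcal{X}_{r,d}^{[u]}|_S$.

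One minor bookkeeping point worth mentioning, though I would not expect it to be a real obstacle, is that Proposition~\ref{335} is stated for subvarieties of $\mathbb{P}^{n_1}\times\cdots\times\mathbb{P}^{n_t}\times\mathbb{P}^w$, while $\mathcal{X}_{r,d}^{[u]}|_S$ sits naturally inside $(\mathbf{G}_m^n)^u \times \mathbf{H} \subset \mathbf{P}^u \times \mathbb{P}^w$. This fits the framework of \S\ref{2.2.2} upon taking $n_1 = \cdots = n_u = 1$ (with $u$-many factors, since $\mathbf{P} = (\mathbb{P}^1)^n$ is itself a product of projective lines, so in total $un$ factors of $\mathbb{P}^1$) and identifying $\mathbf{H}$ with its image in $\mathbb{P}^w$. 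The line bundle $L = \mathcal{O}(1,\ldots,1,0)$ in this setting is precisely the pullback of $\mathcal{O}(1,\ldots,1)$ from $\mathbf{P}^u$, which is compatible with the discussion in \S\ref{CND}. There is no real content beyond unwinding the notation.

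In summary, the corollary is a formal consequence of Propositions~\ref{335} and~\ref{344}, and the only step requiring any care is verifying that the setup of the degeneracy-locus criterion applies to the fiber product $\mathcal{X}_{r,d}^{[u]}|_S$ sitting in a product of projective spaces with base $\mathbf{H}$; after that, the non-degeneracy is immediate.
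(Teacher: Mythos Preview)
Your proposal is correct and follows essentially the same approach as the paper, which simply states that the corollary is immediate from Proposition~\ref{344} and the definition of non-degeneracy. Your explicit invocation of Proposition~\ref{335} as the bridge between Proposition~\ref{344} and non-degeneracy, together with the bookkeeping about the ambient multi-projective space, just makes the paper's one-line argument more explicit.
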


\begin{proof}
Immediate from Proposition \ref{344} and the definition of non-degeneracy.
\end{proof}

\section{Equidistribution Results}\label{ER}

This section is devoted to establishing a key equidistribution result in our setting.

\subsection{Setup}\label{s31}
Let $S$ be an irreducible subvariety of $\mathbb{P}^{w}$, and denote by $\overline{S}$ its Zariski closure in $\mathbb{P}^{w}$. We work with homogeneous coordinates $[x_0:\cdots:x_n]$ on $\mathbb{P}^n$ throughout this discussion.

Consider the product space $\mathbb{P}^n \times \mathbb{P}^w$ with projection maps:
\begin{align*}
p &: \mathbb{P}^n \times \mathbb{P}^w \to \mathbb{P}^n \\
\pi &: \mathbb{P}^n \times \mathbb{P}^w \to \mathbb{P}^w
\end{align*}
These induce the following commutative diagram:
\begin{equation}
    \begin{tikzcd}
X \arrow[d, hook] \arrow[r, hook] & \mathbb{P}^n \times \mathbb{P}^{w} \arrow[d, "\pi"] \arrow[r, "p"] & \mathbb{P}^n \\
S \arrow[r, hook] & \mathbb{P}^{w} &
\end{tikzcd}
\end{equation}

Let $L = \mathcal{O}(1,0)$ denote the tautological line bundle on $\mathbb{P}^n \times \mathbb{P}^w$. We define a family of functions parameterized by $l \in \mathbb{Z} \cup \{\infty\}$:
\begin{align*}
f_l &:= \log\left( \frac{|x_0|^l + \cdots + |x_n|^l}{n+1} \right)^{1/l} \quad \text{for } l \in \mathbb{Z}_{>0} \\
f_{\infty} &:= \log\left( \max\{|x_0|, \ldots, |x_n|\} \right)
\end{align*}
These induce corresponding $(1,1)$-currents:
\begin{equation}
    U_{l} := dd^c f_{l} \quad \text{for } l \in \mathbb{Z}_{>0} \cup \{\infty\}
\end{equation}
Each $U_l$ represents the first Chern class $c_1(L)$ in cohomology.

The canonical height $\hat{h}_L$ associated to $L$ can be constructed via Tate's limiting argument as follows. For any integer $N > 1$, consider the fiberwise multiplication map:
\begin{equation}
[N] : \mathbb{P}^n \times \mathbb{P}^w \to \mathbb{P}^n \times \mathbb{P}^w \quad \text{ by } ([x_0,\ldots,x_n], z) \mapsto ([x_0^N,\ldots,x_n^N], z)
\end{equation}
The canonical height is then obtained as the limit:
\begin{equation}
    \hat{h}_L(x) = \lim_{m \to \infty} \frac{1}{N^m} h_L([N^m](x))
\end{equation}
where $h_L$ is taken to be any height function associated to the line bundle $L$. This construction restricts to the standard Weil height on each fiber $\mathbb{P}^n \times \{z\}$ for $z \in \mathbb{P}^w$.
\subsection{Statement of the Theorem} 

\begin{thm}\label{431}
Let $S$ be an irreducible subvariety of $\mathbb{P}^{w}$, and let 
\[
X\subset \mathbb{P}^{n}\times \mathbb{P}^{w}
\]
be an irreducible non-degenerate subvariety defined over $\mathbb{Q}$ (for example, $\mathcal{X}_{r,d}^{[u]}$ for sufficiently large $u$, by Proposition \ref{345}). Assume that $\{x_n\}\subset X(\overline{\mathbb{Q}})$ is a Zariski-generic sequence of small points (i.e., the sequence converges in the Zariski topology to the generic point of $X$) and that
\[
\lim_{n\rightarrow +\infty}\hat{h}_{L}(x_n)=0.
\]
Then
\begin{equation}\label{4.3.1}
\frac{1}{\#O(x_n)}\sum_{y\in O(x_n)}\delta_{y}\xrightarrow{\text{weakly}} k\,U_{\infty}^{\wedge\dim X},
\end{equation}
where 
\[
k=\left(\int_{X(\mathbb{C})}U_{\infty}|_{X}^{\wedge \dim X}\right)^{-1}.
\]
Here, $O(x_n)$ denotes the Galois orbit of $x_n$.
\end{thm}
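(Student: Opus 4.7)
The plan is to deduce Theorem~\ref{431} from a general arithmetic equidistribution theorem of Szpiro--Ullmo--Zhang~\cite{SUZ97}, in the form extended by Yuan to big, semipositive adelically metrized line bundles. Concretely, I will equip $L|_X$ with an adelic metric whose associated height function is $\hat{h}_L$ and whose archimedean curvature is the semipositive current $U_\infty$, then verify the three hypotheses: bigness of $L|_X$, semipositivity of the metric, and vanishing of the essential minimum. The weak convergence \eqref{4.3.1} is then immediate.

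First I would construct the adelic metric via Tate's limiting procedure. At the archimedean place, start from the model metric on $L=\mathcal{O}(1,0)$ whose local potential is $f_\infty$; since $[N]^{*}L\cong L^{\otimes N}$ and $f_\infty\circ [N]=N f_\infty$, this metric is already a fixed point of $\tfrac{1}{N}[N]^{*}$, so Tate's iteration stabilizes and the archimedean curvature is $U_\infty$, which is semipositive. At each non-archimedean place, start from a model metric and iterate $\tfrac{1}{N}[N]^{*}$; the usual contraction argument yields a uniform limit. Assembling these gives a semipositive adelic metric $\overline{L}|_X$ whose associated height is exactly $\hat{h}_L$. Bigness of $L|_X$ is immediate from the non-degeneracy hypothesis via Definition~\ref{331}(1), together with the volume identity
\[
\int_{X(\mathbb{C})} U_\infty|_X^{\wedge\dim X} \;=\; \deg_L X \;>\; 0.
\]
The essential minimum $e_1(L|_X)$ is $\leq 0$ by the small-points hypothesis, while $\hat{h}_L\geq 0$ on $X(\overline{\mathbb{Q}})$ because $\hat{h}_L$ is the pullback of the standard non-negative Weil height on $\mathbb{P}^{n}$. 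Hence $e_1=0$, and Yuan's theorem applies, yielding the asserted weak convergence with normalization constant $k=(\deg_L X)^{-1}$.

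The main obstacle will be the careful analytic verification that Tate's limits produce a genuine integrable semipositive adelic metric to which the equidistribution theorem applies: at the archimedean place one must invoke Bedford--Taylor theory to make sense of $U_\infty^{\wedge\dim X}$ as a positive measure on the possibly singular $X$, and at non-archimedean places convergence must be interpreted in the Chambert--Loir/Zhang sense. A secondary subtlety is confirming $e_1\geq 0$: although this is transparent in the pure toric setting where $\hat{h}$ vanishes exactly on torsion, here it requires checking that the sup-norm metric on $\mathcal{O}(1,0)$ really produces a non-negative height on $X$, using the closed immersion $X\hookrightarrow\mathbb{P}^{n}\times\mathbb{P}^{w}$ and the fact that $L$ is pulled back from $\mathbb{P}^{n}$. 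Once these points are settled, the remainder of the argument is a direct invocation of the general equidistribution machinery.
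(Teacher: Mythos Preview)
Your proposal is correct and follows essentially the same approach as the paper: endow $L|_X$ with the canonical sup-norm adelic metric, verify bigness (from non-degeneracy), semipositivity, and the vanishing of the essential minimum, then apply the SUZ--Yuan equidistribution machinery. The only difference is packaging: you invoke Yuan's theorem as a black box, whereas the paper carries out the variational argument explicitly, using Zhang's fundamental inequality together with Yuan's arithmetic-bigness lemma to first establish $\widehat{\deg}(\overline{\mathcal{L}}^{\dim X+1}|_{\mathcal{X}})=0$ before perturbing the metric by $e^{-\lambda f}$.
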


In the case of a multi-projective space (see Definition \ref{333}), the above theorem can be modified as follows:

\begin{thm}\label{432}
Suppose that 
\[
X\subset \mathbb{P}^{n_1}\times \cdots \times \mathbb{P}^{n_t} \times \mathbb{P}^{w}
\]
is a non-degenerate subvariety defined over $\mathbb{Q}$ and lying over an irreducible subvariety $S\subset \mathbb{P}^{w}$. Denote the homogeneous coordinates on $\mathbb{P}^{n_i}$ by
\[
[x_0^{(i)}:\cdots:x_{n_i}^{(i)}],
\]
and set
\[
L^{'}:=\mathcal{O}(1,\dots,1,0).
\]
Assume that $\{x_n\}\subset X(\overline{\mathbb{Q}})$ is a Zariski-generic sequence of small points (in the sense described above). Then
\begin{equation}\label{equ}
\frac{1}{\#O(x_n)}\sum_{y\in O(x_n)}\delta_{y}\xrightarrow{\text{weakly}} k\,(U^{'}_{\infty})^{\wedge\dim X},
\end{equation}
where 
\[
U^{'}_{\infty}:=\sum_{i=1}^{t}dd^{c}\left(\log \max\{|x_0^{(i)}|,\dots,|x_{n_i}^{(i)}|\}\right)
\]
and 
\[
k=\left(\int_{X(\mathbb{C})}(U^{'}_{\infty}|_X)^{\wedge \dim X}\right)^{-1}.
\]
Here, $O(x_n)$ denotes the Galois orbit of $x_n$.
\end{thm}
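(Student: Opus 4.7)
The plan is to reduce Theorem \ref{432} to Theorem \ref{431} via the Segre embedding introduced in \S\ref{2.2.2}. Let $\sigma: \mathbb{P}^{n_1}\times\cdots\times\mathbb{P}^{n_t}\hookrightarrow \mathbb{P}^{N}$ denote the Segre embedding, and set $\tilde\sigma := \sigma\times \mathrm{id}_{\mathbb{P}^{w}}$, which is a closed immersion defined over $\mathbb{Q}$. Let $X':=\tilde\sigma(X)\subset \mathbb{P}^{N}\times \mathbb{P}^{w}$; it is an irreducible closed subvariety of the same dimension as $X$, lying over $S$. The idea is simply that all data entering Theorem \ref{431}—the line bundle, the canonical height, the equilibrium form, and the hypotheses on the test sequence—match those in Theorem \ref{432} under pullback by $\tilde\sigma$.

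First I would establish the pullback identities. For the line bundles, $\tilde\sigma^{*}\mathcal{O}(1,0)=\mathcal{O}(1,\ldots,1,0)=L'$, which is standard. For the canonical height, the key point is that the fiberwise $[N]$-map used in the Tate limit on $\mathbb{P}^{N}\times \mathbb{P}^{w}$ intertwines via $\tilde\sigma$ with the componentwise $[N]$-map $([x^{(i)}_{j}]_{j})_{i}\mapsto ([(x^{(i)}_{j})^{N}]_{j})_{i}$ on $\mathbb{P}^{n_{1}}\times\cdots\times \mathbb{P}^{n_{t}}$, because Segre coordinates $z_{I}=\prod_{i}x^{(i)}_{j_{i}}$ satisfy $z_{I}^{N}=\prod_{i}(x^{(i)}_{j_{i}})^{N}$; consequently $\hat h_{L}\circ \tilde\sigma = \hat h_{L'}$. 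For the forms, the elementary identity
\[
\max_{I}\Bigl|\prod_{i=1}^{t}x^{(i)}_{j_{i}}\Bigr| \;=\; \prod_{i=1}^{t}\max_{j}|x^{(i)}_{j}|
\]
gives $\tilde\sigma^{*}f_{\infty}=\sum_{i=1}^{t}\log\max_{j}|x^{(i)}_{j}|$, hence after applying $dd^{c}$,
\[
\tilde\sigma^{*}U_{\infty}=U'_{\infty},\qquad \tilde\sigma^{*}U_{\infty}^{\wedge \dim X'}=(U'_{\infty})^{\wedge \dim X}.
\]

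Next I would transfer the hypotheses. Non-degeneracy of $X$ in the sense of Definition \ref{333} is equivalent to bigness of $L'|_{X}$, which by the identification above is the same as bigness of $L|_{X'}$, i.e. non-degeneracy of $X'$ in the sense of Definition \ref{331}. Since $\tilde\sigma$ is defined over $\mathbb{Q}$, it is Galois-equivariant, so $\tilde\sigma$ maps the Galois orbit $O(x_{n})$ bijectively to $O(\tilde\sigma(x_{n}))$. Zariski-genericity and smallness both transfer because $\tilde\sigma$ is a closed immersion and $\hat h_{L'}=\hat h_{L}\circ\tilde\sigma$. Applying Theorem \ref{431} to $X'$ with the transferred sequence $\{\tilde\sigma(x_{n})\}$ yields
\[
\frac{1}{\#O(\tilde\sigma(x_{n}))}\sum_{y\in O(\tilde\sigma(x_{n}))}\delta_{y}\xrightarrow{\text{weakly}} k\, U_{\infty}^{\wedge\dim X'}
\]
as measures on $X'(\mathbb{C})$.

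Finally, since $\tilde\sigma:X\to X'$ is an analytic isomorphism, pulling back this weak convergence along $\tilde\sigma^{-1}$ and using the form identity $\tilde\sigma^{*}U_{\infty}^{\wedge\dim X'}=(U'_{\infty})^{\wedge\dim X}$ gives (\ref{equ}); the constant $k$ is unchanged because $\int_{X(\mathbb{C})}(U'_{\infty}|_{X})^{\wedge\dim X}=\int_{X'(\mathbb{C})}U_{\infty}|_{X'}^{\wedge\dim X'}$ by the same pullback. The only genuinely computational step is the explicit $\max$-identity under the Segre embedding, and this is the step I would write out carefully; everything else is a bookkeeping translation through the closed immersion $\tilde\sigma$. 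I do not expect a serious obstacle: the whole content of Theorem \ref{432} beyond Theorem \ref{431} is packaged into the Segre computation.
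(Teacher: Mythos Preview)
Your reduction via the Segre embedding is correct, and the paper does not write out a separate proof of Theorem~\ref{432}: only Theorem~\ref{431} is proved in \S3.4, and Theorem~\ref{432} is left implicit. Given that \S\ref{2.2.2} already sets up the Segre map $p$ and records (immediately after Definition~\ref{333}) the identity $p^{*}T_{\infty}=U'_{\infty}$, your argument is exactly the bridge the paper's framework anticipates; the alternative would be to rerun the Arakelov argument of \S3.4 directly on $\mathbb{P}^{n_1}_{\mathbb{Z}}\times\cdots\times\mathbb{P}^{n_t}_{\mathbb{Z}}\times\mathbb{P}^{w}_{\mathbb{Z}}$, but your route is cleaner and avoids repetition.
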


The following proposition is a direct consequence of the equidistribution result above.

\begin{prop}\label{433}
Let $X$ be a non-degenerate irreducible subvariety of $\mathbb{P}^n\times \mathbb{P}^{w}$ defined over $\mathbb{Q}$, and let 
\[
\mu=k\,U_{\infty}^{\wedge \dim X}
\]
be the measure from Theorem \ref{431}. Then, for every function $f\in C^{\infty}_{c}(X^{an})$ and every $\epsilon>0$, there exist a proper subvariety $Z_{f,\epsilon}$ of $X$ and a constant $\delta_{\epsilon}>0$ such that, for every point 
\[
x\in (X\setminus Z_{f,\epsilon})(\overline{\mathbb{Q}}),
\]
either
\begin{itemize}
    \item $\hat{h}_{L}(x)\geq \delta_{\epsilon}$; 
    \item or
    \[
    \left|\frac{1}{\# O(x)}\sum_{y\in O(x)}f(y)-\int_{X^{an}}f\,\mu\right|<\epsilon.
    \]
\end{itemize}
\end{prop}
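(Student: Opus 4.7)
The plan is to derive the proposition from the equidistribution theorem (Theorem \ref{431}) by a standard contradiction argument. Suppose the conclusion fails for some $f \in C_c^\infty(X^{\mathrm{an}})$ and $\epsilon > 0$: then for every proper subvariety $Z \subsetneq X$ and every $\delta > 0$ there is a point $x \in (X \setminus Z)(\overline{\mathbb{Q}})$ with $\hat{h}_L(x) < \delta$ and equidistribution defect
$$
\operatorname{def}(x) := \left| \frac{1}{\#O(x)} \sum_{y \in O(x)} f(y) - \int_{X^{\mathrm{an}}} f\, \mu \right| \geq \epsilon.
$$
The goal is to cook up, from this hypothesis, a Zariski-generic sequence of small points whose defects all exceed $\epsilon$, contradicting Theorem \ref{431} tested against $f$.

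To construct such a sequence, set $B_\delta := \{x \in X(\overline{\mathbb{Q}}) : \hat{h}_L(x) < \delta,\ \operatorname{def}(x) \geq \epsilon\}$; the failure hypothesis is precisely that $B_\delta$ is not contained in any proper subvariety of $X$, for any $\delta > 0$. Since $X$ is defined over $\mathbb{Q}$, there are only countably many proper subvarieties of $X$ defined over $\overline{\mathbb{Q}}$ (each cut out by finitely many polynomials drawn from the countable set of $\overline{\mathbb{Q}}$-polynomials); enumerate them as $Y_1, Y_2, \ldots$. Because $X$ is irreducible, each finite union $Y_1 \cup \cdots \cup Y_n$ is still a proper subvariety, so $B_{1/n} \setminus (Y_1 \cup \cdots \cup Y_n) \neq \emptyset$ and I can pick $x_n$ in this set.

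The sequence $\{x_n\}$ then has $\hat{h}_L(x_n) < 1/n \to 0$ and is Zariski-generic: if some proper subvariety $Y \subsetneq X$ contained an infinite subsequence $\{x_{n_k}\}$, the Zariski closure of this subsequence --- being the closure of a set of $\overline{\mathbb{Q}}$-points, hence $\overline{\mathbb{Q}}$-defined and proper in $X$ --- would coincide with some $Y_{n_0}$, yet by construction $x_{n_k} \notin Y_{n_0}$ whenever $n_k \geq n_0$, a contradiction. Applying Theorem \ref{431} (or Theorem \ref{432} in the multi-projective setting) to $\{x_n\}$ yields weak convergence of the Galois-orbit averages to $\mu$, so in particular $\operatorname{def}(x_n) \to 0$, contradicting $\operatorname{def}(x_n) \geq \epsilon$ for every $n$.

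The only nontrivial step is the production of the Zariski-generic sequence; this hinges on the countability of $\overline{\mathbb{Q}}$-subvarieties together with the irreducibility of $X$. Everything else is a formal invocation of the equidistribution theorem, and I do not anticipate any substantial technical obstacle beyond the enumeration argument sketched above.
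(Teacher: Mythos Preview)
Your proposal is correct and follows essentially the same approach as the paper's proof: both argue by contradiction, use the countability of proper $\overline{\mathbb{Q}}$-subvarieties of $X$ to extract a Zariski-generic sequence of small points with defect $\geq \epsilon$, and then invoke Theorem~\ref{431} to reach a contradiction. Your construction is in fact slightly more careful than the paper's, since you avoid the finite union $Y_1 \cup \cdots \cup Y_n$ at each step rather than just $Y_n$, which makes the verification of Zariski-genericity cleaner.
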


\begin{proof}
Suppose, for contradiction, that there exists a function $f\in C^{\infty}_{c}(X^{an})$ and an $\epsilon>0$ such that for every $\delta>0$ the set
\begin{equation}
D_{\delta}:=\{x\in X(\overline{\mathbb{Q}}):\hat{h}_{L}(x)<\delta \text{ and } \left|\frac{1}{\# O(x)}\sum_{y\in O(x)}f(y)-\int_{X^{an}}f\,\mu\right|\geq\epsilon\}
\end{equation}
is Zariski dense in $X$. Choose a sequence $\delta_{n}\rightarrow 0_{+}$. Since there are only countably many proper closed subvarieties of $X$ defined over $\overline{\mathbb{Q}}$, denote them by $\{Z_n\}_{n\in \mathbb{N}}$. For each $n\in \mathbb{N}$, the Zariski density of $D_{\delta_n}$ implies that we can select a point 
\[
x_n\in (D_{\delta_n}\setminus Z_{n})(\overline{\mathbb{Q}}).
\]
Then the sequence $\{x_n\}$ satisfies:
\begin{itemize}
     \item $\{x_n\}$ converges in the Zariski topology to the generic point of $X$;
    \item $\hat{h}_{L}(x_{n}) \rightarrow 0$; 
    \item and
    \[
    \left|\frac{1}{\# O(x_n)}\sum_{y\in O(x_n)}f(y)-\int_{X^{an}}f\,\mu\right|\geq\epsilon.
    \]
\end{itemize}

This contradicts Theorem \ref{431}, and the proof is complete.
\end{proof}

\begin{rem}
If 
\[
X\subset \mathbb{P}^{n_1}\times \cdots \times \mathbb{P}^{n_t}\times \mathbb{P}^{w}
\]
is a non-degenerate subvariety lying over $S\subset \mathbb{P}^{w}$, then the above proposition remains valid upon replacing $L$ with 
\[
L^{'}=\mathcal{O}(1,\dots,1,0).
\]
\end{rem}
\subsection{Results from Arakelov Geometry}
We begin by introducing essential tools from Arakelov geometry. Basic references for this section include \cite{BGS94}, \cite{GS87}, \cite{GS90}, and \cite{PR21}.

Let $\mathcal{X}$ be a proper flat scheme over $\mathbf{Z}$ with $n := \dim(\mathcal{X})$. For each integer $d \geq 0$, let $Z_d(\mathcal{X})$ denote the group of $d$-cycles on $\mathcal{X}$. 

Given Hermitian line bundles $\overline{\mathcal{L}}_1, \ldots, \overline{\mathcal{L}}_n$ on $\mathcal{X}$, there exists a unique family of linear maps:
\begin{equation}
    \widehat{\deg}(\overline{\mathcal{L}}_1 \cdots \overline{\mathcal{L}}_d | \cdot): Z_d(\mathcal{X}) \rightarrow \mathbf{R},
\end{equation}
for $d \in \{0, \ldots, n\}$, satisfying the following properties:
\begin{itemize}
    \item For any $d \in \{1, \ldots, n\}$, closed subscheme $Z \subset \mathcal{X}$ with $\dim(Z) = d$, nonzero integer $m$, and regular meromorphic section $s$ of $\mathcal{L}_d^m|_Z$:
    \begin{equation}
        m \widehat{\deg}(\overline{\mathcal{L}}_1 \cdots \overline{\mathcal{L}}_d |_{Z}) = \widehat{\deg}(\overline{\mathcal{L}}_1 \cdots \overline{\mathcal{L}}_{d-1}|_{\div(s)}) + \int_{Z(\mathbf{C})} \log \|s\|^{-1} c_1(\overline{\mathcal{L}}_1) \cdots c_1(\overline{\mathcal{L}}_{d-1}).
    \end{equation}
    
    \item For any closed point $z \in \mathcal{X}$:
    \begin{equation}
        \widehat{\deg}(z) = \log (\#\kappa(z)),
    \end{equation}
    where $\kappa(z)$ denotes the residue field of $z$.
\end{itemize}
These maps are multilinear and symmetric in the Hermitian line bundles $\overline{\mathcal{L}}_1, \ldots, \overline{\mathcal{L}}_d$.

Let $X$ be a proper scheme over $\mathbb{Q}$ with line bundle $L$. An integral model of $(X,L)$ consists of a Hermitian line bundle $\overline{\mathcal{L}}$ on $\mathcal{X}$ such that $\mathcal{X}_{\mathbb{Q}} = X$ and $\mathcal{L}_{\mathbb{Q}} = L$. For a closed integral subscheme $Z \subset X$ of dimension $d$ with Zariski closure $\mathcal{Z}$ in $\mathcal{X}$ (satisfying $\dim(\mathcal{Z}) = d+1$), the height of $Z$ relative to $\overline{\mathcal{L}}$ is defined as:
\begin{equation}
    h_{\overline{\mathcal{L}}}(Z) = \widehat{\deg}(\overline{\mathcal{L}}^{d+1}|_{\mathcal{Z}})\big/[(d+1)\deg_L(Z)].
\end{equation}
For any $x \in X(\overline{\mathbb{Q}})$, the height of $x$ with respect to $\overline{\mathcal{L}}$ is given by:
\begin{equation}
    h_{\overline{\mathcal{L}}}(x) := h_{\overline{\mathcal{L}}}(\overline{\{x\}}),
\end{equation}
where $\overline{\{x\}}$ denotes the Zariski closure of $\{x\}$ in $X$. This construction yields a well-defined representative of the height function associated with $L$.

An integral model $(\mathcal{X}, \overline{\mathcal{L}})$ of $(X,L)$ induces a canonical adelic metric on $X$ (see (1.2) in \cite{Zhang95b}). For a line bundle $\overline{L}$ endowed with an adelic metric, we define:
\begin{equation}
    \hat{H}^0(X, \overline{L}) := \{s \in H^0(X,L) : \|s\|_v \leq 1 \text{ for all places } v\},
\end{equation}
and its logarithmic count:
\begin{equation}
    \hat{h}^0(X, \overline{L}) := \log (\# \hat{H}^0(X, \overline{L})).
\end{equation}

Let $\mathbb{A}$ be the ad\`{e}le ring of $\mathbb{Q}$ and $\mu$ a Haar measure on $H^0(X,L) \otimes \mathbb{A}$. For any place $v$ of $K$ and section $s \in H^0(X,L)$, define $\|s\|_v := \sup_{x \in X(\overline{\mathbb{Q}_v})} \|s(x)\|_v$. Let $B_v$ denote the unit ball in $H^0(X,L) \otimes \mathbb{A}$. The $\chi$-character is defined as:
\begin{equation}
    \chi(X, \overline{L}) = -\log \frac{\mu(H^0(X,L) \otimes \mathbb{A}/H^0(X,L))}{\mu(\prod_v B_v)}.
\end{equation}
We subsequently define the volume and $\chi$-volume of $\overline{L}$ by:
\begin{equation}
\text{vol}(\overline{L}):=\limsup_{N\rightarrow \infty}\frac{h^0(NL)}{N^{n}/n!},
\end{equation}
 \begin{equation}
     \text{vol}_{\chi}(\overline{L}):=\limsup_{N\rightarrow \infty}\frac{\chi(N\overline{L})}{N^{n}/n!}.
 \end{equation}

\subsection{Proof of Theorem \ref{431}}
Our argument follows the strategy of Szpiro, Ullmo, and Zhang \cite{SUZ97}. 

Consider the projective space $\mathbb{P}_{\mathbb{Q}}^N$ equipped with the tautological line bundle $\mathcal{O}(1)$, endowed with the following Hermitian metric. For any point $x = [x_0:\cdots:x_N] \in \mathbb{P}^N(\mathbb{C})$ and section $\alpha = a_0x_0 + \cdots + a_Nx_N \in H^0(\mathbb{P}^N, \mathcal{O}(1))$, we define the fiber norm at $x$ by
\begin{equation}
    \|\alpha\|(x) := \frac{|a_0x_0 + \cdots + a_Nx_N|}{\max\{|x_0|, \ldots, |x_N|\}}.
\end{equation}
Denote this Hermitian line bundle by $\overline{\mathcal{O}}(1)$. Through tensor products, this metric naturally induces Hermitian structures on $\mathcal{O}(k)$ for all $k \in \mathbb{N}_+$.

For multiprojective space $\mathbb{P}^{N_1} \times \cdots \times \mathbb{P}^{N_t}$, the Hermitian metric on $\mathcal{O}(k_1, \ldots, k_t)$ arises from the isomorphism
\begin{equation}
    \mathcal{O}(k_1, \ldots, k_t) \simeq pr_1^*\mathcal{O}(k_1) \otimes \cdots \otimes pr_t^*\mathcal{O}(k_t),
\end{equation}
where $pr_i$ denotes the $i$-th projection onto $\mathbb{P}^{N_i}$. This construction yields Hermitian line bundles $\overline{\mathcal{O}}(k_1, \ldots, k_t)$ for positive integers $k_1, \ldots, k_t$.

\begin{proof}[Proof of Theorem \ref{431}]
Let $\mathcal{X}$ be the Zariski closure of $\overline{X}$ in $\mathbb{P}_{\mathbb{Z}}^n \times \mathbb{P}_{\mathbb{Z}}^w$, which decomposes as $\mathcal{X} = \coprod_{v \in S} \mathcal{X}_v$,   where $S = \{p \mid p \text{ prime}\} \cup \{\infty\}$ denotes the complete set of places of $\mathbb{Q}$. Let $\mathcal{L} = \mathcal{O}(1,0)$ extend $L$ to $\mathcal{X}$, and set $v := \dim(\mathcal{X}) = \dim(X) + 1$.

The pair $(\mathcal{X}, \overline{\mathcal{L}})$ forms an integral model of $(\overline{X}, L)$. Following \cite[§1.1-1.2]{Zhang95b}, this model induces a canonical adelic metric $\overline{L}$ on $\overline{X}$ whose restriction to the generic fiber coincides with our construction.

By definition \ref{331}, the measure $U_{\infty}^{\wedge \dim(X)}$ is non-vanishing on $X$, and $L$ constitutes a big line bundle over $X$. Recall Zhang's first successive minimum \cite[(1.9)]{Zhang95b}:
\begin{equation}
    e_1(\overline{L}) := \sup_{Y \subset X} \inf_{x \in X(\overline{K}) \setminus Y(\overline{K})} h_{\overline{\mathcal{L}}}(x),
\end{equation}
where the supremum ranges over all proper closed subvarieties $Y$ of $X$. For any generic sequence $\{x_n\}$, we immediately have
\begin{equation}
    \liminf_{n \to \infty} h_{\overline{\mathcal{L}}}(x_n) \geq e_1(\overline{L}).
\end{equation}
Since our sequence $\{x_n\}$ is generically small, it follows that $e_1(\overline{L}) = 0$.

Applying the fundamental inequality (Theorem 1.10 of \cite{Zhang95b} or Lemma 5.1 of \cite{CT06}), we obtain
\begin{equation}
    e_1(\overline{L}) \geq \frac{\text{vol}_{\chi}(\overline{L})}{v \cdot \text{vol}(L)}.
\end{equation}
Consequently,
\begin{equation}
    \text{vol}_{\chi}(\overline{L}) = 0.
\end{equation}

Through Yuan's arithmetic bigness results (Theorem 2.2 of \cite{Yuan08} or Lemma 5.2 of \cite{CT06}),
\begin{equation}
    \text{vol}_{\chi}(\overline{L}) \geq \widehat{\deg}(\overline{\mathcal{L}}^v|_{\mathcal{X}}),
\end{equation}
which forces $\widehat{\deg}(\overline{\mathcal{L}}^v|_{X}) = 0$.

Define the measure $d\mu := \frac{c_1(\overline{L})^{v-1}}{\deg_L(X)}$ on $X(\mathbb{C})$, which scales the measure induced by $U_{\infty}|_X^{\wedge \dim(X)}$. We demonstrate that for any $f \in C_c^\infty(X(\mathbb{C}))$,
\begin{equation}
    \frac{1}{\#O(x_n)} \sum_{x_n^g \in O(x_n)} f(x_n^g) \to \int_{X(\mathbb{C})} f(x) d\mu \quad \text{as } n \to \infty.
\end{equation}

Let $u_n := \frac{1}{\#O(x_n)} \sum_{x_n^g \in O(x_n)} f(x_n^g)$. Suppose toward contradiction that $u_n \nrightarrow \int_{X(\mathbb{C})} f(x) d\mu$. By boundedness of $\{u_n\}$, we may assume (passing to a subsequence if necessary) that $u_n \to C$ for some $C \neq \int_{X(\mathbb{C})} f(x) d\mu$. Replacing $f$ by $f - \int_{X(\mathbb{C})} f(x) d\mu$, we normalize to $\int_{X(\mathbb{C})} f(x) d\mu = 0$.

For $\lambda \in \mathbb{R}$, consider the modified Hermitian line bundle $\overline{\mathcal{L}}(\lambda f) = (\mathcal{L}, \|\cdot\|')$ with archimedean metric scaled by $e^{-\lambda f}$:
\begin{equation}
    \|t\|'(x) = \begin{cases}
        \|t\|(x) & x \notin \mathcal{X}_\infty(\mathbb{C}) \\
        e^{-\lambda f(x)} \|t\|(x) & x \in \mathcal{X}_\infty(\mathbb{C})
    \end{cases}.
\end{equation}
The height transforms as
\begin{equation}
    h_{\overline{\mathcal{L}}(\lambda f)}(x) = h_{\overline{\mathcal{L}}}(x) + \frac{\lambda}{\#O(x)} \sum_{x^g \in O(x)} f(x^g),
\end{equation}
yielding $\lim_{n \to \infty} h_{\overline{\mathcal{L}}(\lambda f)}(x_n) = \lambda C$.

Reapplying Zhang's fundamental inequality and Yuan's arithmetic bigness:
\begin{equation}
    \frac{\widehat{\deg}(\overline{\mathcal{L}}(\lambda f)^v|_{\mathcal{X}})}{v \cdot \text{vol}(L)} \leq \frac{\text{vol}_\chi(\overline{\mathcal{L}}(\lambda f))}{v \cdot \text{vol}(L)} \leq e_1(\overline{L}(\lambda f)) \leq \lambda C.
\end{equation}
However, expanding the arithmetic degree:
\begin{align*}
    \widehat{\deg}(\overline{\mathcal{L}}(\lambda f)^v|_{\mathcal{X}}) &= \widehat{\deg}\left((\overline{\mathcal{L}} + \overline{\mathcal{O}}(\lambda f))^v|_{\mathcal{X}}\right) \\
    &= \widehat{\deg}(\overline{\mathcal{L}}^v|_{\mathcal{X}}) + v \widehat{\deg}(\overline{\mathcal{L}}^{v-1} \cdot \overline{\mathcal{O}}(\lambda f)|_{\mathcal{X}}) + O(\lambda^2) \\
    &= \lambda \int_{X(\mathbb{C})} f c_1(L)^{v-1} +O(\lambda^2) \\
    &= O(\lambda^2).
\end{align*}
This implies $\lambda C \geq O(\lambda^2)$. Taking $\lambda \to 0^+$ and $\lambda \to 0^-$ successively forces $C = 0$, completing the proof.
\end{proof}
\section{Apply Equidistribution Results to the Uniform Bogomolov Conjecture}

The primary objective of this section is to present a novel proof of the uniform Bogomolov conjecture for algebraic tori.

\subsection{Notational Setup}
Fix integers \( n, d, r \geq 1 \). Consider the morphism \( \phi = (\phi_1, \ldots, \phi_n): \mathbf{G}_m^n \rightarrow (\mathbb{P}^1)^n \) defined by
\begin{equation}\label{501}
    \phi(x_1, \ldots, x_n) = ([1:x_1], \ldots, [1:x_n]).
\end{equation}
Recall from Section \ref{CND} that we constructed \( \mathcal{X}_{r,d} \subset (\mathbb{P}^1)^n \times \mathbb{P}^w \) as the universal family of irreducible subvarieties of \( \mathbf{G}_m^n \) with dimension \( r \) and degree \( d \) relative to \( \mathcal{O}(1, \ldots, 1) \). The parameter space for this family is a quasi-projective subvariety \( \mathbf{H} \subset \mathbb{P}^w \). Let \( \mathcal{B} := \mathbf{G}_m^n \times \mathbf{H} \) denote the corresponding family of algebraic tori.

We work with the line bundle \( L := \mathcal{O}(1, \ldots, 1, 0) \) over \( (\mathbb{P}^1)^n \times \mathbb{P}^w \), where the zero component corresponds to \( \mathbb{P}^w \) and the \( (1, \ldots, 1) \) components correspond to \( (\mathbb{P}^1)^n \). For simplicity, denote \( \mathbf{P} := (\mathbb{P}^1)^n \).

\subsection{Main Theorem}

We establish the following fundamental result:

\begin{thm}\label{512}
Let \( S \) be a subvariety of \( \mathbf{H} \). There exist positive constants \( c_1(r,d,n,S) \) and \( c_2(r,d,n,S) \) satisfying the following property:

For any \( s \in S(\overline{\mathbb{Q}}) \) such that:
\begin{enumerate}
    \item Each fiber \( (\mathcal{X}_{r,d})_s \) is an irreducible variety generating \( \mathcal{B}_s = \mathbf{G}_m^n \);
    \item The stabilizer \( \text{Stab}((\mathcal{X}_{r,d})_s) \) is finite;
\end{enumerate}
the set
\begin{equation}
    \left\{ x \in (\mathcal{X}_{r,d})_s(\overline{\mathbb{Q}}) : \hat{h}_L(x) \leq c_2 \right\}
\end{equation}
is contained in a proper closed subvariety \( Z \subset (\mathcal{X}_{r,d})_s \) with \( \deg_{L_s}(\overline{Z}) < c_1 \).
\end{thm}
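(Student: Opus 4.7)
The plan is to argue by contradiction, deriving an incompatibility between two applications of the equidistribution Theorem~\ref{432} and the geometric conditions (1)--(2) imposed on the fibers. The overall shape of the argument follows the template of K\"uhne~\cite{LK21} and Gao--Ge--K\"uhne~\cite{Gao21} in the abelian setting, here transplanted to the toric family $\mathcal{X}_{r,d}\to \mathbf{H}$. Assuming the conclusion fails, for each $k\geq 1$ we may find $s_k\in S(\overline{\mathbb{Q}})$ satisfying (1)--(2) and a set $\Sigma_k\subset (\mathcal{X}_{r,d})_{s_k}(\overline{\mathbb{Q}})$ with $\hat{h}_L|_{\Sigma_k}\leq 1/k$ that is not contained in any subvariety of $(\mathcal{X}_{r,d})_{s_k}$ of degree $\leq k$. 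After passing to a subsequence and shrinking $S$ to the Zariski closure of $\{s_k\}$, we may assume $\{s_k\}$ is Zariski-dense in the irreducible variety $S$.

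By Corollary~\ref{345}, there exists $u\geq 1$ such that $X:=\mathcal{X}_{r,d}^{[u]}|_S$ is non-degenerate over $S$; it is also irreducible by Remark~\ref{342}. For each $k$, I would choose a tuple $(x_{k,1},\ldots,x_{k,u})\in \Sigma_k^u$ and form $y_k:=(x_{k,1},\ldots,x_{k,u},s_k)\in X(\overline{\mathbb{Q}})$, so that $\hat{h}_L(y_k)=\sum_{i=1}^u \hat{h}_L(x_{k,i})\leq u/k$. A diagonal argument, exhausting the countably many $\overline{\mathbb{Q}}$-subvarieties of $X$ and using that $\Sigma_k$ cannot be covered by low-degree proper subvarieties of $(\mathcal{X}_{r,d})_{s_k}$, arranges that $\{y_k\}$ is Zariski-generic in $X$. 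Theorem~\ref{432} then yields the weak convergence
\[
\frac{1}{\#O(y_k)}\sum_{z\in O(y_k)}\delta_{z}\xrightarrow{\text{weakly}}\mu:=k\,(U'_\infty|_X)^{\wedge \dim X},
\]
and $\mu$ is a nonzero positive measure on $X(\mathbb{C})$ by non-degeneracy.

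For the second application, I would exploit the group structure on $\mathbf{G}_m^n$. Fix $N\geq 2$ and let $[N]$ act coordinatewise on one chosen factor of $(\mathbf{G}_m^n)^u$, extended to $\mathbf{P}^u$ via $\phi$. Since $[N]$ multiplies the Weil height by $N$ and can be arranged to preserve the Zariski genericity of our sequence, $\{[N](y_k)\}$ is again a small-height Zariski-generic sequence in $X$, so Theorem~\ref{432} yields the same limit $\mu$. Comparing this with the pushforward of the first limit forces $\mu$ to satisfy a scaling invariance under $[N]$. Combining this relation with the classical Bilu rigidity argument~\cite{B97} applied fiberwise to the projection $X\to S$, the support of $\mu$ on the generic fiber $X_s$ must be a finite union of products of translates of subtori $H_i\subset (\mathcal{X}_{r,d})_s$. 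Such concentration of the equilibrium measure is incompatible with conditions (1) and (2) on $(\mathcal{X}_{r,d})_s$, giving the desired contradiction.

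The main obstacle is the final rigidity step: one must uniformly, as $s$ varies in $S$, upgrade the scaling invariance of $\mu$ under $[N]$ into the conclusion that the fibers of $\mathcal{X}_{r,d}$ at generic $s$ would have to be translates of subtori. Over a single point this is Bilu's classical trick, but in the family $X\to S$ one must invoke both the non-vanishing of the restriction of $\mu$ to generic fibers (ensured by non-degeneracy) and the uniform equidistribution estimates of \cite{LK21,Gao21}. These effective estimates are ultimately the source of the uniformity of the constants $c_1$ and $c_2$ across $s\in S(\overline{\mathbb{Q}})$.
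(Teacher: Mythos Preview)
Your proposal has a genuine gap in the second application of equidistribution. The map $[N]$ does \emph{not} send $X=\mathcal{X}_{r,d}^{[u]}|_S$ into itself: a generic subvariety of $\mathbf{G}_m^n$ is not invariant under $N$-th powers, so $[N](y_k)\notin X$ and Theorem~\ref{432} applied to $X$ says nothing about this sequence. If instead you apply Theorem~\ref{432} on the image variety $[N](X)$, you would be comparing $[N]_*\mu_X$ with $\mu_{[N](X)}$; but since $[N]^*U'_\infty=N\cdot U'_\infty$ coordinatewise, one has $[N]^*\bigl((U'_\infty|_{[N](X)})^{\wedge\dim X}\bigr)$ proportional to $(U'_\infty|_X)^{\wedge\dim X}$, and after normalization the two measures agree. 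So the $[N]$-map is too compatible with the canonical metric to produce any tension, and the ``Bilu rigidity'' step never gets off the ground. Moreover, your invocation of the fiberwise conclusion (that generic fibers would have to be translates of subtori) is not justified by any argument in the proposal; in a family this requires a genuine measure-support analysis, not just an appeal to the classical single-torus trick.

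The paper's proof replaces $[N]$ by the Faltings--Zhang difference map
\[
\alpha=(\mathrm{id},\alpha_{u,M}):X^{[M+2]}\longrightarrow X\times_{S'}\alpha_{u,M}(X^{[M+1]}),\qquad \alpha_{u,M}(a_0,\dots,a_M)=(a_1a_0^{-1},\dots,a_Ma_0^{-1}),
\]
which still sends small points to small points but is \emph{not} compatible with the equilibrium measures. Using the finiteness of $\mathrm{Stab}((\mathcal{X}_{r,d})_\eta)$ one shows $\alpha$ is generically finite for $M\gg0$, and then an explicit support computation (Section~\ref{5.1}) proves $\alpha_*\mu_1\neq\mu_2$: the support of $\mu_2$ pulled back through $\alpha$ contains the locus where the last $t(M+1)$ coordinates all lie on $S^1$ (via the diagonal), whereas any piece of $\mathrm{supp}(\mu_1)$ pins down only $d_M<t(M+2)$ coordinates to $S^1$. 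From $\alpha_*\mu_1\neq\mu_2$ one chooses test functions $f_1=f_2\circ\alpha$ separating the two integrals, applies Proposition~\ref{433} to both $X^{[M+2]}$ and $\alpha(X^{[M+2]})$ to obtain a closed $Z_{S'}\subsetneq X^{[M+2]}$ and a height gap $\delta_{S'}$, and concludes by Noetherian induction on $\dim S$ together with the degree-bounding Lemma~4.3 of \cite{Gao21}. The difference map---not a power map---is the missing ingredient in your outline.
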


When \( S = \mathbf{H} \), parameterizing all integral subvarieties of \( \mathbf{G}_m^n \), the line bundle \( L_s \) corresponds to \( \mathcal{O}(1, \ldots, 1) \) restricted to \( (\mathcal{X}_{r,d})_s \). In this context, \( \hat{h}_L(x) \) represents the standard fiberwise height. This yields the following equivalent formulation:

\begin{thm}\label{513}
Let \( r \geq 1 \) and \( d \geq 1 \) be integers. There exist positive constants \( c_1(r,d,n) \) and \( c_2(r,d,n) \) with the following property:

For any irreducible subvariety \( Z \subset \mathbf{G}_m^n \) satisfying:
\begin{enumerate}
    \item \( \dim Z = r \) and \( \deg_{\mathcal{O}(1,\ldots,1)} \overline{Z} = d \);
    \item \( Z \) generates \( \mathbf{G}_m^n \);
    \item \( \text{Stab}(Z) \) is finite;
\end{enumerate}
there exists a proper closed subvariety \( X \subset Z \) with \( \deg_{\mathcal{O}(1,\ldots,1)}(\overline{X}) < c_1 \) such that
\[
    \Sigma := \{ x \in Z(\overline{\mathbb{Q}}) : \hat{h}(x) \leq c_2 \}
\]
is contained in \( X(\overline{\mathbb{Q}}) \).
\end{thm}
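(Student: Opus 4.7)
The plan is to proceed by contradiction, using the Hilbert-scheme family $\mathcal{X}_{r,d} \to \mathbf{H}$ to package all potential counterexamples into a single geometric object to which the equidistribution theorem can be applied. Suppose the theorem fails for some fixed $n,r,d$; then for sequences $c_1^{(k)} \to \infty$ and $c_2^{(k)} \to 0$, I obtain irreducible subvarieties $Z_k \subset \mathbf{G}_m^n$ satisfying (1)--(3) whose small-point sets $\Sigma_k := \{x \in Z_k(\overline{\mathbb{Q}}) : \hat{h}(x) \leq c_2^{(k)}\}$ are not contained in any proper subvariety of $Z_k$ of degree $< c_1^{(k)}$. Since $\mathbf{H}$ is of finite type over $\overline{\mathbb{Q}}$, after passing to a subsequence the points $s_k := [Z_k] \in \mathbf{H}(\overline{\mathbb{Q}})$ become Zariski-dense in some irreducible $S \subset \mathbf{H}$.

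By Corollary \ref{345} I fix $u$ large enough that $X := \mathcal{X}_{r,d}^{[u]}|_S$ is non-degenerate over $S$ (and irreducible by Remark \ref{342}). The next task is to extract a Zariski-generic sequence of small points in $X(\overline{\mathbb{Q}})$. Enumerating the proper closed $\overline{\mathbb{Q}}$-subvarieties $\{W_j\}_{j \geq 1}$ of $X$ and combining the failure hypothesis with Proposition \ref{lem1} in a diagonal argument, I select for each $k$ a $u$-tuple $(y_k^{(1)}, \ldots, y_k^{(u)}) \in \Sigma_k^u$ whose image $p_k := (y_k^{(1)}, \ldots, y_k^{(u)}, s_k) \in X$ avoids $W_1, \ldots, W_k$; the key point is that the trace of each $W_j$ on the fiber $Z_k^u$ is a proper subvariety whose degree is controlled by $W_j$ alone (independent of $k$), so once $c_1^{(k)}$ is sufficiently large it cannot absorb all of $\Sigma_k^u$. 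Since $\hat{h}_L(p_k) \leq u c_2^{(k)} \to 0$, Theorem \ref{431} applies and the Galois orbits of $p_k$ equidistribute to the canonical measure $\mu$ of that theorem, a positive multiple of $U_\infty^{\wedge \dim X}$ on $X$.

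The final step compares $\mu$ with the fiberwise Bilu measure and closes the loop by a second application of equidistribution. The explicit form of $U_\infty$ under the compactification $\phi$ shows that on a generic fiber $X_s = Z_s^u$ the disintegration of $\mu$ equals the $u$-fold product of Bilu measures attached to $Z_s$. Bilu's rigidity then forces $Z_s$ to contain a torsion coset of a positive-dimensional subtorus. To convert this into a contradiction with hypotheses (2) and (3), I run the equidistribution theorem a second time on the auxiliary non-degenerate variety obtained from $\mathcal{X}_{r,d}^{[2]}|_S$ via the difference map $(x, y, s) \mapsto (xy^{-1}, s)$: condition (2) forces its image to be Zariski-dense in $\mathbf{G}_m^n$, so the translated limit measure must be the Haar measure on the full unit polytorus, which contradicts condition (3) because it would force $\operatorname{Stab}(Z_s)$ to contain a positive-dimensional subtorus.

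The principal obstacle I expect is the interaction of these steps rather than any one of them individually. Making the diagonal argument in the second paragraph precise requires quantifying ``the trace of $W_j$ on $Z_k^u$'' uniformly in $k$, which depends on the uniform geometry of $\mathcal{X}_{r,d}^{[u]}$ over $\mathbf{H}$ packaged by Propositions \ref{lem1}--\ref{344}. Equally delicate is showing that the disintegration of the pluripotential current $U_\infty^{\wedge \dim X}$ along the possibly singular family $X \to S$ produces exactly the product of Bilu measures on the fibers; this is the point at which the choice of compactification $\mathbf{P} = (\mathbb{P}^1)^n$ rather than $\mathbb{P}^n$ becomes essential, since $U_\infty$ then has a product structure aligned with the toric coordinates and the Bilu measure on $\mathbf{G}_m^n$ takes its classical explicit form on each factor.
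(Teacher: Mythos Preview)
Your overall framework---contradiction, the universal family $\mathcal{X}_{r,d}^{[u]}|_S$, non-degeneracy via Corollary~\ref{345}, and equidistribution---matches the paper's setup, and the diagonal extraction in your second paragraph is essentially the Gao--Ge--K\"uhne packing lemma the paper invokes in its Step~4. The genuine gap is in your final step, where the mechanism for the contradiction breaks down in three places. First, the disintegration of $U_\infty^{\wedge\dim X}$ along $X\to S$ does \emph{not} in general yield the fiberwise Bilu product measure: the analysis in \S\ref{5.1} shows the support of $\mu$ is a union of real-analytic sets $X_{\mathbf i}$ cut out by $d_X$ conditions $|\phi_{i_j}(x)|=1$ that mix fiber and base coordinates, and there is no reason the conditional measures on fibers should be Haar on $(S^1)^{nu}\cap Z_s^u$. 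Second, invoking ``Bilu's rigidity'' to deduce that $Z_s$ contains a positive-dimensional torsion coset is circular: that implication is the non-uniform Bogomolov conjecture itself, not Bilu's equidistribution theorem. Third, your second application of equidistribution to the difference image of $\mathcal{X}_{r,d}^{[2]}|_S$ cannot conclude anything about $\operatorname{Stab}(Z_s)$: the map $(x,y)\mapsto xy^{-1}$ on $Z_s^2$ has $r$-dimensional fibers, not finite ones, and the image generating $\mathbf{G}_m^n$ is precisely hypothesis~(2)---the limit measure being Haar on the full polytorus is perfectly compatible with $\operatorname{Stab}(Z_s)$ being trivial.

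The paper's mechanism is different and avoids disintegration entirely. It applies equidistribution (via Proposition~\ref{433}) to two non-degenerate varieties at once: $X^{[M+2]}$ and its image under the Faltings--Zhang map $\alpha=(\mathrm{id},\alpha_{u,M})$ with $\alpha_{u,M}(a_0,\dots,a_M)=(a_1a_0^{-1},\dots,a_Ma_0^{-1})$. Hypothesis~(3) enters \emph{here}, not at the end: it forces $\alpha$ to be generically finite for $M\gg0$ because the generic fibers are orbits of the finite group $\operatorname{Stab}(X_\eta)$ (Lemma~\ref{lem:stab_orbits}). A direct support comparison using the description of \S\ref{5.1} then shows $\alpha_*\mu_1\neq\mu_2$. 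Since a generic small sequence would have to equidistribute to both measures through the relation $f_1=f_2\circ\alpha$, one obtains a height gap $\delta_{S'}>0$ outside a fixed proper closed $Z_{S'}\subset X^{[M+2]}$ (Lemma~\ref{sh}). Noetherian induction on $\dim S$ together with the Gao--Ge--K\"uhne lemma then converts this into the uniform degree bound on the exceptional locus in each fiber.
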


The connection between Theorem \ref{513} and Theorem \ref{1.1.1} will be elucidated in Section \ref{5.3}.

\subsection{Investigation of the Equilibrium Measure}\label{5.1}

We investigate the equilibrium measure established in Theorems \ref{431} and \ref{432}.

Let $\mathcal{X}_{r,d}$ be as defined in Section \ref{CND}, and let $S$ be an irreducible component of $\mathbf{H}$. By corollary \ref{345}, we fix a positive integer $u$ such that $X := \mathcal{X}_{r,d}^{[u]}|_S$ constitutes a non-degenerate subvariety over $S$. Let $\overline{X}$ denote the Zariski closure of $X$ in $(\mathbb{P}^1)^{nu} \times \mathbb{P}^w$, where each fiber $(\mathcal{X}_{r,d}^{[u]})_s$ forms an irreducible subvariety of $(\mathbf{G}_m^n)^u \subset (\mathbb{P}^1)^{nu}$. Set $t := nu$ and $d_X := \dim X$. The non-degeneracy of $X$ implies that the projection $p \colon X \to (\mathbb{P}^1)^t$ is generically finite, hence $d_X \leq t$.

Through Theorem \ref{432}, we associate an equilibrium measure $\mu_{\overline{X}}$ to $\overline{X}$. The following commutative diagram illustrates our setting:
\begin{equation}
    \begin{tikzcd}
        X \arrow[d] \arrow[r, hook] & \mathbf{G}_m^t \times \mathbb{P}^w \arrow[r, hook] & (\mathbb{P}^1)^t \times \mathbb{P}^w \arrow[dl, "\pi"] \arrow[r, "p"] & (\mathbb{P}^1)^t \\
        S \arrow[r, hook] & \mathbb{P}^w
    \end{tikzcd},
\end{equation}
where $p$ and $\pi$ denote the respective projections.

Let $(z_0, \ldots, z_t, z)$ be coordinates on $(\mathbb{P}^1)^t \times \mathbb{P}^w$ with $z_i = [x_0^{(i)}:x_1^{(i)}] \in \mathbb{P}^1$ and $z \in \mathbb{P}^w$. The equilibrium measure $\mu_{\overline{X}}$ on $\overline{X}$ can be expressed explicitly as a multiple of
\begin{equation}\label{5522}
    \mu = \left(\sum_{i=1}^t dd^c \log \max\{|x_0^{(i)}|, |x_1^{(i)}|\}\right)^{\! \wedge d_X}.
\end{equation}
Restricting to $X$, we have
\begin{equation}\label{5523}
    \mu|_X = \left(\sum_{i=1}^t dd^c \log \max\{1, |x_1^{(i)}|\}\right)^{\! \wedge d_X}.
\end{equation}

For each $1 \leq i \leq t$, define $M_i := dd^c \log \max\{|x_0^{(i)}|, |x_1^{(i)}|\}$. Let $I$ denote the set of $d_X$-tuples $\mathbf{i} = (i_1, \ldots, i_{d_X})$ for which $M_{i_1} \wedge \cdots \wedge M_{i_{d_X}}$ does not vanish identically on $\overline{X}$. This yields the decomposition
\begin{equation}
    \mu = \binom{t}{d_X} \sum_{\mathbf{i} \in I} M_{i_1} \wedge \cdots \wedge M_{i_{d_X}}.
\end{equation}

For each $\mathbf{i} \in I$, define:
\begin{align}
    X_{\mathbf{i}} &:= \{x \in X : |\phi_{i_1}(x)| = \cdots = |\phi_{i_{d_X}}(x)| = 1\}, \\
    \phi_{\mathbf{i}} &:= (\phi_{i_1}, \ldots, \phi_{i_{d_X}}) \colon \mathbf{G}_m^t \times \mathbf{H} \to (\mathbb{P}^1)^{d_X}, \\
    E_{\mathbf{i}} &:= \{x \in X : \ker(d\phi_{\mathbf{i}}|_X) \neq \{0_x\}\} \cup \left((\mathbb{P}^1)^t \setminus \mathbf{G}_m^t\right).
\end{align}

\begin{lem}\label{5521}
    For each $\mathbf{i} \in I$:
    \begin{enumerate}
        \item The restriction $\phi_{\mathbf{i}}|_{\overline{X} \setminus E_{\mathbf{i}}}$ constitutes a local biholomorphism to $(\mathbb{P}^1)^{d_X}$;
        \item The restriction $\phi_{\mathbf{i}}|_{X_{\mathbf{i}} \setminus E_{\mathbf{i}}}$ forms a real-analytic local isomorphism to $(S^1)^{d_X}$.
    \end{enumerate}
\end{lem}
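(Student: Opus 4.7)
\textbf{Plan of proof of Lemma \ref{5521}.} The strategy is to reduce both parts to the holomorphic and real-analytic inverse function theorems, exploiting the matching of complex dimensions $\dim_{\mathbb{C}} \overline{X} = d_X = \dim_{\mathbb{C}}(\mathbb{P}^1)^{d_X}$. Once this dimension count is in place, the definition of $E_{\mathbf{i}}$ is precisely engineered to package the two analytic ingredients needed: bijective differential (from the first clause) and regularity of $\phi_{\mathbf{i}}$ (from the second clause that removes the boundary $(\mathbb{P}^1)^t \setminus \mathbf{G}_m^t$).

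For part (1), I would fix a point $x \in \overline{X} \setminus E_{\mathbf{i}}$ and first argue that $x$ must be a smooth point of $\overline{X}$. Indeed, if $x$ were singular, the Zariski tangent space $T_x \overline{X}$ would have complex dimension strictly greater than $d_X$, so any $\mathbb{C}$-linear map from $T_x \overline{X}$ into $\mathbb{C}^{d_X}$ would have nontrivial kernel, contradicting $x \notin E_{\mathbf{i}}$. Since $x$ also lies above $\mathbf{G}_m^t$ (by the second clause in the definition of $E_{\mathbf{i}}$), the projections $\phi_{i_j}$ are regular holomorphic functions near $x$. Therefore $d\phi_{\mathbf{i}}|_{T_x \overline{X}} : T_x \overline{X} \to T_{\phi_{\mathbf{i}}(x)}(\mathbb{P}^1)^{d_X}$ is a $\mathbb{C}$-linear isomorphism between two $d_X$-dimensional complex vector spaces, and the holomorphic inverse function theorem provides a local holomorphic inverse of $\phi_{\mathbf{i}}$ in a neighborhood of $x$ in $\overline{X}$.

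For part (2), observe the tautology $X_{\mathbf{i}} = (\phi_{\mathbf{i}}|_X)^{-1}((S^1)^{d_X})$, so $\phi_{\mathbf{i}}$ automatically maps $X_{\mathbf{i}}$ into $(S^1)^{d_X} \subset (\mathbb{P}^1)^{d_X}$. Given $x \in X_{\mathbf{i}} \setminus E_{\mathbf{i}}$, part (1) supplies a holomorphic local inverse $\psi$ of $\phi_{\mathbf{i}}$ on an open neighborhood of $\phi_{\mathbf{i}}(x)$ in $(\mathbb{P}^1)^{d_X}$. Restricting $\psi$ to the real-analytic submanifold $(S^1)^{d_X}$ produces a real-analytic map whose image is precisely $X_{\mathbf{i}}$ near $x$, since $(S^1)^{d_X}$ is exactly the preimage under $\phi_{\mathbf{i}} \circ \psi = \mathrm{id}$ of itself. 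This exhibits $\phi_{\mathbf{i}}|_{X_{\mathbf{i}} \setminus E_{\mathbf{i}}}$ as a real-analytic local isomorphism onto $(S^1)^{d_X}$.

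The main point requiring care, which I would verify at the outset, is the claim that $E_{\mathbf{i}}$ genuinely absorbs every singular point of $\overline{X}$ together with the divisor at infinity, so that the complement $\overline{X} \setminus E_{\mathbf{i}}$ is a smooth complex manifold of pure dimension $d_X$ lying in $\mathbf{G}_m^t \times \mathbb{P}^w$ on which $\phi_{\mathbf{i}}$ is a holomorphic map with bijective differential. Once that is established, both parts of the lemma reduce to standard applications of the inverse function theorem, and there is no deeper analytic or algebraic obstacle; the role of the hypothesis $\mathbf{i} \in I$ is only to ensure that $\overline{X} \setminus E_{\mathbf{i}}$ is nonempty (in fact Zariski dense), so that the statement is not vacuous.
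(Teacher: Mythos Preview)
Your proposal is correct and follows essentially the same route as the paper: both arguments reduce to the holomorphic inverse function theorem by combining the dimension equality $\dim_{\mathbb{C}} \overline{X} = d_X = \dim_{\mathbb{C}} (\mathbb{P}^1)^{d_X}$ with the injectivity of $d\phi_{\mathbf{i}}$ away from $E_{\mathbf{i}}$, and then derive part~(2) from part~(1) by restriction to the preimage $\phi_{\mathbf{i}}^{-1}((S^1)^{d_X})$. The paper additionally spells out, via the projection formula applied to the Chern classes $c_1(pr_{i_j}^*\mathcal{O}(1))$, why the hypothesis $\mathbf{i}\in I$ forces $\overline{X}\setminus E_{\mathbf{i}}\neq\emptyset$; you mention this non-emptiness but do not supply that intersection-theoretic justification, so it would strengthen your write-up to include it explicitly.
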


\begin{proof}
    By definition of $I$, the wedge product $M_{i_1} \wedge \cdots \wedge M_{i_{d_X}}$ does not vanish identically on $X$. Each $M_i$ represents the first Chern class of $pr_i^*\mathcal{O}(1)$, where $pr_i \colon (\mathbb{P}^1)^t \to \mathbb{P}^1$ denotes the $i$-th projection. The projection formula \cite[Theorem 2.5(c)]{WF98} ensures $\overline{X} \setminus E_{\mathbf{i}} \neq \emptyset$. As $\overline{X}$ is irreducible, $E_{\mathbf{i}}$ is a closed complex analytic subset of dimension $<d_X$.
    By definition of $E_{\mathbf{i}}$, and $\dim \overline{X}=d_X=\dim (\mathbb{P}^1)^{d_X}$, we see that $\phi_{\mathbf{i}}$ restricts on $\overline{X}\setminus E_{\mathbf{i}}$ is a local biholomorphism  map to $(\mathbb{P}^1)^{d_X}$. Hence $X_{\mathbf{i}}\setminus E_{\mathbf{i}}=\phi_{\mathbf{i}}^{-1}((S^1)^{d_X})\cap (\overline{X}\setminus E_{\mathbf{i}})$, maps to $(S^1)^{d_X}$ is real analytic local  isomorphism.

\end{proof}

Let $dw = \left(\frac{1}{2\pi}\right)^{d_X} d\theta_1 \wedge \cdots \wedge d\theta_{d_X}$ denote the normalized Haar measure on $(S^1)^{d_X}$, where $\theta_i$ parametrizes the angular coordinate on each circle factor. Define $dw_{\mathbf{i}} := \phi_{\mathbf{i}}^*(dw)$ on $X_{\mathbf{i}} \setminus E_{\mathbf{i}}$.

\begin{prop}
    For each $\mathbf{i} \in I$ and all $f \in C_c(\overline{X})$:
    \begin{equation}\label{5.2.8}
        \int_{\overline{X}} f M_{i_1} \wedge \cdots \wedge M_{i_{d_X}} = \int_{X_{\mathbf{i}} \setminus E_{\mathbf{i}}} f dw_{\mathbf{i}}.
    \end{equation}
\end{prop}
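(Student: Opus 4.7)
The idea is to reduce the identity to the single-variable computation on $\mathbb{P}^1$ and then transport it to $\overline{X}$ via the local biholomorphism provided by Lemma \ref{5521}. Concretely, I would proceed in four steps.

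First, I would handle the base case on $\mathbb{P}^1$. In the affine chart $x=x_1^{(i)}/x_0^{(i)}$, the current $M_i = dd^c\log\max\{|x_0^{(i)}|,|x_1^{(i)}|\} = dd^c\log\max\{1,|x|\}$ is a classical object: writing $\log\max\{1,|x|\}=\max\{0,\log|x|\}$ and applying $dd^c$ via Bedford--Taylor, one gets exactly the normalized Haar measure $\tfrac{1}{2\pi}d\theta$ supported on the unit circle $S^1\subset\mathbb{G}_m$. Iterating this coordinate-wise on $(\mathbb{P}^1)^{d_X}$, the wedge product of the pullbacks of the $d_X$ standard $M$-currents equals the product Haar measure $dw$ on $(S^1)^{d_X}$; this is legitimate because each factor has a locally bounded plurisubharmonic potential, so Bedford--Taylor wedge products are well-defined.

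Second, I would justify that the set $E_{\mathbf{i}}$ carries no mass. By Lemma \ref{5521}, $E_{\mathbf{i}}$ is a proper closed complex analytic subset of $\overline{X}$ of dimension strictly less than $d_X$ (together with the boundary divisor $(\mathbb{P}^1)^t\setminus\mathbf{G}_m^t$, which is likewise analytic and of lower dimension). Since the $M_{i_j}$ are $(1,1)$-currents with locally bounded potentials on $(\mathbb{P}^1)^t\times\mathbb{P}^w$, the restriction of $M_{i_1}\wedge\cdots\wedge M_{i_{d_X}}$ to $\overline{X}$ is a well-defined positive Radon measure that does not charge lower-dimensional complex analytic (hence pluripolar) subsets. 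Therefore
\begin{equation*}
\int_{\overline{X}}fM_{i_1}\wedge\cdots\wedge M_{i_{d_X}}=\int_{\overline{X}\setminus E_{\mathbf{i}}}fM_{i_1}\wedge\cdots\wedge M_{i_{d_X}}.
\end{equation*}

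Third, on $\overline{X}\setminus E_{\mathbf{i}}$ the map $\phi_{\mathbf{i}}$ is a local biholomorphism to $(\mathbb{P}^1)^{d_X}$. Each $M_{i_j}$ is the pullback under $\phi_{i_j}$ (hence under $\phi_{\mathbf{i}}$) of the standard one-variable current on the $j$-th $\mathbb{P}^1$-factor. Pullback commutes with Bedford--Taylor wedge products under local biholomorphisms, so
\begin{equation*}
\bigl(M_{i_1}\wedge\cdots\wedge M_{i_{d_X}}\bigr)\big|_{\overline{X}\setminus E_{\mathbf{i}}}=\phi_{\mathbf{i}}^{*}\bigl(M_{1}\wedge\cdots\wedge M_{d_X}\bigr)=\phi_{\mathbf{i}}^{*}(dw),
\end{equation*}
by Step one. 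The support of $\phi_{\mathbf{i}}^{*}(dw)$ inside $\overline{X}\setminus E_{\mathbf{i}}$ is exactly $\phi_{\mathbf{i}}^{-1}((S^1)^{d_X})\cap(\overline{X}\setminus E_{\mathbf{i}})=X_{\mathbf{i}}\setminus E_{\mathbf{i}}$, and by definition $\phi_{\mathbf{i}}^{*}(dw)=dw_{\mathbf{i}}$ there. Combining with Step two yields \eqref{5.2.8}.

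The main potential obstacle is the second step, namely rigorously checking that the Bedford--Taylor measure puts no mass on $E_{\mathbf{i}}$ and that pullback by $\phi_{\mathbf{i}}$ genuinely commutes with the wedge product of currents in our situation. Both are standard consequences of the continuity of the Monge--Ampère operator on currents with locally bounded potentials (see Demailly, \cite{D12}, Chap.~III), but one has to be careful because $\phi_{\mathbf{i}}$ is only a local biholomorphism: the identification $\phi_{\mathbf{i}}^{*}(M_1\wedge\cdots\wedge M_{d_X})=\phi_{\mathbf{i}}^{*}M_1\wedge\cdots\wedge\phi_{\mathbf{i}}^{*}M_{d_X}$ must be verified locally and then glued, which is where the non-vanishing assumption $\mathbf{i}\in I$ ensuring $\overline{X}\setminus E_{\mathbf{i}}\neq\emptyset$ is essential. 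Once this verification is in place, the identity \eqref{5.2.8} follows directly.
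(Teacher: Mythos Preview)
Your proposal is correct and follows essentially the same route as the paper: remove the pluripolar set $E_{\mathbf{i}}$, use the local biholomorphism $\phi_{\mathbf{i}}$ from Lemma~\ref{5521} to reduce to $(\mathbb{P}^1)^{d_X}$, and then invoke the one-variable identity $\int_{\mathbb{P}^1}g\,dd^c\log\max\{|x_0|,|x_1|\}=\int_{S^1}g\,dw$ together with Fubini. The paper's proof is terser---it absorbs your ``pullback commutes with wedge'' discussion into a partition-of-unity reduction plus Fubini---but the structure and the key inputs are identical.
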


\begin{proof}
  As $E_{\mathbf{i}}$ is locally pluripolar (being a closed complex analytic subset with $\dim E_{\mathbf{i}} < d_X$), we may assume $f \in C_c(\overline{X} \setminus E_{\mathbf{i}})$. By Lemma \ref{5521}(1), $\phi_{\mathbf{i}}$ restricts on $\overline{X}\setminus E_{\mathbf{i}}$ is a local biholomorphism  map to $(\mathbb{P}^1)^{d_X}$, we may further assume $f$ has support in some open subset $U$ such that $\phi_{\mathbf{i}}$ maps $U$ biholomorphic to some open subset $(\mathbb{P}^1)^{d_X}$. Formula \ref{5.2.8} follows from the elementary calculation 
    \begin{equation}
        \int_{\mathbb{P}^1}g dd^c\log \max\{|x_0|,|x_1|\}=\int_{S^1}gdw\ \text{for\ }g\in C_c(\mathbb{P}^1) 
    \end{equation}
     combined with Fubini's theorem.

\end{proof}

This proposition establishes the measure decomposition:
\begin{equation}
    \mu(f) = \binom{t}{d_X} \sum_{\mathbf{i} \in I} \int_{X_{\mathbf{i}} \setminus E_{\mathbf{i}}} f dw_{\mathbf{i}}, \quad f \in C_c(\overline{X}).
\end{equation}
The equilibrium measure $\mu_{\overline{X}}$ is then obtained by normalizing $\mu$ such that $\mu_{\overline{X}}(\overline{X}) = 1$.

\subsection{Proof of Uniform Bogomolov Conjecture}\label{5.3}

Assuming theorem \ref{513}, we can derive theorem \ref{1.1.1} through a straightforward induction argument as follows.

\begin{proof}[Deduction of theorem \ref{1.1.1} from theorem \ref{513} (or \ref{512})]

We proceed by induction on $r = \dim Z$. 

\textbf{Base case:} For $r = 0$, the result holds trivially.

\textbf{Inductive step:} For $r > 0$, consider two cases:
\begin{itemize}
    \item If $\operatorname{Stab}(Z)$ is infinite, then $Z^{\circ} = \emptyset$ by definition, requiring no further proof.
    \item If $\operatorname{Stab}(Z)$ is finite, then $Z$ generates a subtorus $T$ of $\mathbf{G}_m^n$ with $\operatorname{rank}(T) \leq n$. Applying theorem \ref{513}, there exist constants $c_1(r,d,n)$ and $c_2(r,d,n)$ such that the set
    \begin{equation}
        \{x \in Z(\overline{\mathbb{Q}}) : \hat{h}(x) \leq c_2(r,d,n)\}
    \end{equation}
    is contained in a proper closed subvariety $Z' \subset Z$ with $\deg_L(\overline{Z'}) < c_1$. Since $Z'$ has at most $c_1$ irreducible components and $\dim Z' \leq r-1$, where $c_1$ and $c_2$ depend only on $r,d,n$, the theorem follows by induction.
\end{itemize}
\end{proof}

To complete the argument, it remains to establish Theorem \ref{512}.

\begin{proof}[Proof of Theorem \ref{512}]
We divide the proof into several strategic steps.

\textbf{Step 1: Constructing non-degenerate subvarieties.}\\ 
 Consider the set
\begin{equation}
    T := \{s \in S(\overline{\mathbb{Q}}) : s \text{ satisfies conditions (1) and (2) in Theorem \ref{512}}\}.
\end{equation}
Let $S'$ be an irreducible component of $\overline{T}$. The finite collection of such $S'$ is uniquely determined by $S$. If we can find constants $c_1(r,d,n,S') > 0$ and $c_2(r,d,n,S') > 0$ such that for any $s \in T(\overline{\mathbb{Q}})$, the set
\begin{equation}
    \{x \in (\mathcal{X}_{r,d})_s(\overline{\mathbb{Q}}) : \hat{h}_{L}(x) \leq c_2\}
\end{equation}
lies in a proper closed subvariety $Z \subset (\mathcal{X}_{r,d})_s$ with $\deg_{L_s}(\overline{Z}) < c_1$, then considering each irreducible component of $\overline{T}$ will complete the proof.

Fix an irreducible component $S'$ of $\overline{T}$. By Proposition \ref{345}, for sufficiently large $u$, the restriction $\mathcal{X}_{r,d}^{[u]}|_{S'}$ becomes a non-degenerate subvariety of $\mathcal{B}^{[u]}|_{S'}$. Fix such a $u$ and let $X := \mathcal{X}_{r,d}^{[u]}|_{S'}$, which is irreducible in $(\mathbb{P}^1)^{nu} \times \mathbb{P}^w$ since $S'$ is irreducible with irreducible fibers. Let $t := nu$.

Define the fiberwise Faltings-Zhang map:
\begin{equation}
    \alpha_{u,M} : X^{[M+1]} \rightarrow (\mathcal{B}_{S'}^{[u]})^{[M]} = \mathbf{G}_m^{tM} \times S'
\end{equation}
acting fiberwise via $(a_0, a_1, \ldots, a_M) \mapsto (a_1a_0^{-1}, \ldots, a_Ma_0^{-1})$, where operations are understood fiberwise within the rank $t$ torus. Let
\begin{equation}
    \alpha := (\operatorname{id}, \alpha_{u,M}) : X \times_{S'} X^{[M+1]} \rightarrow \mathcal{B}_{S'}^{[u(M+1)]}.
\end{equation}
By Proposition \ref{nonc}, the image $\alpha(X^{[M+2]}) = X \times_{S'} \alpha_{u,M}(X^{[M+1]})$ forms a non-degenerate subvariety of $\mathcal{B}_{S'}^{[u(M+1)]}$.

\textbf{Step 2. Compare two measures on the non-degenerate subvarieties.}

\noindent Fix an integer $M$ sufficiently large (to be determined later). Applying equidistribution theory (Theorems \ref{431} and \ref{432}) to both $X^{[M+2]}$ and its image $\alpha(X^{[M+2]})$, we let $\mu_1$ and $\mu_2$ denote their respective equilibrium measures for $X^{[M+2]}$ and $\alpha(X^{[M+2]})$.

Our strategy involves selecting $M$ large enough to ensure that $\alpha$ becomes generic finite. Let $\eta$ denote the generic point of $S'$, and consider the stabilizer subgroups:

\begin{equation}
\operatorname{Stab}_{\mathcal{B}_{\eta}}((\mathcal{X}_{r,d})_{\eta}) := \left\{ b \in \mathcal{B}_{\eta} \mid b(\mathcal{X}_{r,d})_{\eta} = (\mathcal{X}_{r,d})_{\eta} \right\}
\end{equation}

Similarly, we define the unipotent stabilizer subgroup:
\begin{equation}
\operatorname{Stab}_{\mathcal{B}_{\eta}}(X_{\eta}) := \left\{ b \in \mathcal{B}_{\eta}^{u} \mid bX_{\eta} = X_{\eta} \right\}
\end{equation}

These definitions are well-posed since $\mathcal{B}_{\eta} = \mathbf{G}_m^{n}$ and $(\mathcal{X}_{r,d})_{\eta} \subset \mathbf{G}_m^{n}$ constitutes an algebraic subvariety. We observe the relationship:

\begin{equation}
\operatorname{Stab}_{\mathcal{B}_{\eta}}(X_{\eta}) = \operatorname{Stab}_{\mathcal{B}_{\eta}}((\mathcal{X}_{r,d})_{\eta})^{u}.
\end{equation}

Finally, let $\beta_{M} := \alpha_{u,M}|_{X_{\eta}^{M+1}}$ denote the restriction of $\alpha_{u,M}$ to $X_{\eta}^{M+1}$.

\begin{lem}\label{lem:stab_orbits}
For $M$ sufficiently large, the generic fibers of the map $\beta_{M}: X_{\eta}^{[M+1]} \rightarrow (\mathbf{G}_m^t)^{M}$ are orbits under the stabilizer subgroup $\operatorname{Stab}_{\mathcal{B}_{\eta}}(X_{\eta})$.
\end{lem}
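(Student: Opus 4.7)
The plan is to establish the two inclusions separately. Set $H := \mathrm{Stab}_{\mathcal{B}_{\eta}}(X_\eta) \subset \mathbf{G}_m^t$. The containment of the $H$-orbit inside the $\beta_M$-fiber is immediate from the commutativity of $\mathbf{G}_m^t$: for any $h \in H$ and $(a_0,\ldots,a_M) \in X_\eta^{M+1}$, the translated tuple $(ha_0,\ldots,ha_M)$ again lies in $X_\eta^{M+1}$ and $(ha_i)(ha_0)^{-1} = a_i a_0^{-1}$. The substantive task is the reverse inclusion over a Zariski open subset of $X_\eta^{M+1}$. Since two tuples with the same $\beta_M$-image differ by a common multiplicative constant $c = a_0'a_0^{-1} \in \mathbf{G}_m^t$, this reduces to showing: for $(a_0,\ldots,a_M)$ in a dense open subset of $X_\eta^{M+1}$, every $c \in \mathbf{G}_m^t$ satisfying $ca_i \in X_\eta$ for all $i$ must already lie in $H$.

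The core of the argument is a dimension count on the incidence correspondence
\begin{equation*}
Z_M := \{(c, a_0, \ldots, a_M) \in (\mathbf{G}_m^t \setminus H) \times X_\eta^{M+1} : c a_i \in X_\eta \text{ for } 0 \leq i \leq M\}.
\end{equation*}
For any fixed $c \notin H$, the irreducibility of $X_\eta$ (Remark~\ref{342}) together with $cX_\eta \neq X_\eta$ forces $\dim(X_\eta \cap c^{-1}X_\eta) \leq \dim X_\eta - 1$, so the fiber of $Z_M$ over $c$, namely $(X_\eta \cap c^{-1}X_\eta)^{M+1}$, has dimension at most $(M+1)(\dim X_\eta - 1)$. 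Combining with $\dim(\mathbf{G}_m^t \setminus H) \leq t$ yields $\dim Z_M \leq t + (M+1)(\dim X_\eta - 1)$.

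Choosing any $M \geq t$ makes this bound strictly less than $(M+1)\dim X_\eta = \dim X_\eta^{M+1}$, so the second projection $Z_M \to X_\eta^{M+1}$ is not dominant. Hence there is a proper closed subvariety $W \subsetneq X_\eta^{M+1}$ outside of which no $c \in \mathbf{G}_m^t \setminus H$ satisfies $ca_i \in X_\eta$ for all $i$. Combined with the easy direction, this identifies $\beta_M^{-1}(\beta_M(a_0,\ldots,a_M))$ with the $H$-orbit of $(a_0,\ldots,a_M)$ for every $(a_0,\ldots,a_M) \in X_\eta^{M+1} \setminus W$, and since such tuples map to a dense open subset of $\beta_M(X_\eta^{M+1})$, the generic fiber of $\beta_M$ is an $H$-orbit, as claimed.

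The only nontrivial geometric input is the strict dimension drop $\dim(X_\eta \cap c^{-1}X_\eta) < \dim X_\eta$ for $c \notin H$, which is where I expect any difficulty to lie. It rests squarely on the irreducibility of $X_\eta = ((\mathcal{X}_{r,d})_\eta)^u$ recorded in Remark~\ref{342}; without irreducibility one would have to track how multiplication by $c$ permutes the components of $X_\eta$, a considerably more delicate analysis. The explicit bound $M \geq t = nu$ produced by this count is of course far from optimal, but it is enough to invoke the lemma for some $M$ in the applications that follow.
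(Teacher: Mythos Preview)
Your argument is correct. It differs from the paper's proof in its mechanism: the paper runs a Noetherian descending-chain argument on the closed subsets $G(x_1,\ldots,x_k)=\{b\in\mathbf{G}_m^t:bx_i\in X_\eta\text{ for all }i\}$, observing that finite intersections of such sets are again of this form and that the total intersection is $\operatorname{Stab}_{\mathcal{B}_\eta}(X_\eta)$; hence some specific $(M'+1)$-tuple already has $G=\operatorname{Stab}$, and an (implicit) semicontinuity step passes this to the generic tuple for all $M\ge M'$. Your incidence-correspondence dimension count replaces the Noetherian step by the single inequality $\dim(X_\eta\cap c^{-1}X_\eta)\le\dim X_\eta-1$ for $c\notin H$, which is exactly where the irreducibility from Remark~\ref{342} enters. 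What you gain is an explicit threshold $M\ge t=nu$ and a self-contained argument that avoids the semicontinuity passage; what the paper's approach gains is that it never needs to bound $\dim(X_\eta\cap c^{-1}X_\eta)$ uniformly in $c$, so it would adapt more directly to situations where $X_\eta$ is only known to be equidimensional rather than irreducible.
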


\begin{proof}[Proof of the lemma]
For any tuple $(x_1,\ldots,x_{M+1}) \in X^{M+1}$, define 
\begin{equation}
G(x_1,\ldots,x_{M+1}) := \left\{ b \in \mathbf{G}_m^t \mid bx_i \in X \text{ for all } i=1,\ldots,M+1 \right\}.
\end{equation}
The fiber of $\beta_M$ containing $(x_1,\ldots,x_{M+1})$ consists precisely of
\begin{equation}
\left\{ (bx_1,\ldots,bx_{M+1}) \mid b \in G(x_1,\ldots,x_{M+1}) \right\}.
\end{equation}

Notice that the intersection of finitely many subvarieties of the form $G(x_1,..., x_{M+1})$ is still the form $G(y_1,..., y_{N})$. The intersection of all subvarieties of the form $G(x_1,..., x_{M+1})$ is $\operatorname{Stab}_{\mathcal{B}_{\eta}}(X_{\eta})$.

Therefore, there exists $M' \in \mathbb{N}$ and a tuple $(x_1,\ldots,x_{M'+1}) \in X_{\eta}^{M'}$ such that
\begin{equation}
G(x_1,\ldots,x_{M'+1}) = \operatorname{Stab}_{\mathcal{B}_{\eta}}(X_{\eta}).
\end{equation}
The result follows by taking any integer $M \geq M'$.
\end{proof}

Observe that $\operatorname{Stab}_{\mathcal{B}_{\eta}}(X_{\eta})$ is finite by the defining properties of $S'$. Let us define
\begin{equation}
G_{\eta} := \{0\} \times \operatorname{Stab}_{\mathcal{B}_{\eta}}(X_{\eta}) \subset \mathcal{B}_{\eta}^{u} \times \mathcal{B}_{\eta}^{u(M+1)} = \mathcal{B}_{\eta}^{u(M+2)},
\end{equation}
where we mildly abuse notation by embedding $\operatorname{Stab}_{\mathcal{B}_{\eta}}(X_{\eta})$ diagonally into $\mathcal{B}_{\eta}^{u(M+1)}$. The preceding lemma demonstrates that the generic fiber of $\beta_M$ coincides with a $G_{\eta}$-orbit.

Choose a Zariski open subset $U \subset S'$ where $G_{\eta}$ extends to a finite group subscheme of $\mathcal{B}_{S'}^{[u(M+2)]}|_{U}$. Subsequently, select a Zariski open dense subset $V$ of $\alpha(X^{[M+2]})|_{U}$ such that:
\begin{itemize}
\item The restriction $\alpha: \alpha^{-1}(V) \rightarrow V$ becomes finite étale;
\item For each $y \in V(\mathbb{C})$ with $s = \pi(y)$, the fiber $\alpha^{-1}(y)$ forms a $G_{s}$-orbit.
\end{itemize}
This establishes that $\alpha$ becomes generically finite when $M$ is sufficiently large.

For such $M$, the equidimensionality $\dim X^{[M+2]} = \dim \alpha(X^{[M+2]}) = d_M$ follows from the generic finiteness of $\alpha$. Moreover, we can choose $M$ large enough such that $d_M > t$.

We will show that\begin{equation}
    \alpha_*\mu_1 \neq \mu_2.
\end{equation} 
Recall from last section the explicit descriptions of equilibrium measures $\mu_1$ and $\mu_2$. Let $I$ index the $d_M$-tuples $\mathbf{i} = (i_1,\ldots,i_{d_M})$ where $M_{i_1} \wedge \cdots \wedge M_{i_{d_M}}$ doesn't vanish identically on $\overline{X^{[M+2]}}$. Recall
\begin{equation}
X_{\mathbf{i}} := \{x \in \overline{X^{[M+2]}} : |\phi_{i_1}(x)| = \cdots = |\phi_{i_{d_M}}(x)| = 1\}
\end{equation}
\begin{equation}
\phi_{\mathbf{i}} := (\phi_{i_1},\ldots,\phi_{i_{d_M}}): (\mathbf{G}_m)^{t(M+2)} \times \mathbf{H} \rightarrow (\mathbb{P}^1)^{t(M+2)}
\end{equation}
\begin{equation}
E_{\mathbf{i}} := \left\{x \in \overline{X^{[M+2]}} : \ker(d\phi_{\mathbf{i}}|_{\overline{X^{[M+2]}}}) \neq \{0_x\}\right\} \cup \left((\mathbb{P}^1)^{t(M+2)} \setminus \mathbf{G}_m^{t(M+2)}\right)
\end{equation}
\begin{equation}
dw_{\mathbf{i}} := \phi_{\mathbf{i}}^*dw
\end{equation}
The measure $\mu_1$ decomposes as
\begin{equation}
\int_{\overline{X^{[M+2]}}} f d\mu_1 = k \sum_{\mathbf{i} \in I} \int_{X_{\mathbf{i}} \setminus E_{\mathbf{i}}} f dw_{\mathbf{i}}
\end{equation}
for some $k > 0$ and all $f \in C_c(\overline{X^{[M+2]}})$.

Analogously, for $\alpha(X^{[M+2]})$ with index set $J$, define
\begin{equation}
Y_{\mathbf{j}} := \{x \in \overline{\alpha(X^{[M+2]})} : |\phi_{j_1}(x)| = \cdots = |\phi_{j_{d_M}}(x)| = 1\}
\end{equation}
\begin{equation}
F_{\mathbf{j}} := \left\{x \in \overline{\alpha(X^{[M+2]})} : \ker(d\phi_{\mathbf{j}}|_{\overline{\alpha(X^{[M+2]})}}) \neq \{0_x\}\right\} \cup \left((\mathbb{P}^1)^{t(M+2)} \setminus \mathbf{G}_m^{t(M+2)}\right)
\end{equation}
yielding the measure decomposition
\begin{equation}
\int_{\overline{\alpha(X^{[M+2]})}} g d\mu_2 = k' \sum_{\mathbf{j} \in J} \int_{Y_{\mathbf{j}} \setminus F_{\mathbf{j}}} g dw_{\mathbf{j}},
\end{equation}
for some constant $k'$.
Define 
\begin{equation}
E := \{x \in X^{[M+2]} \mid \ker(d\alpha|_{X^{[M+2]}}) \neq \{0_x\}\}.
\end{equation}
The generic finiteness of $\alpha$ ensures $X^{[M+2]} \setminus E \neq \emptyset$, making $E$ a locally pluripolar subset as it is a complex analytic subset. Since $\overline{\alpha(E)}$ is pluripolar and $\mu_2$ doesn't assign any mass on pluripolar sets (it is a fundamental fact in pluripotential theory), we may assume
\begin{equation}
Y_{\mathbf{j}} \setminus F_{\mathbf{j}} \subset \alpha(X^{[M+2]} \setminus E)
\end{equation}
with the measure relation
\begin{equation}
\int_{Y_{\mathbf{j}} \setminus F_{\mathbf{j}}} f dw_{\mathbf{j}} = \int_{\alpha^{-1}(Y_{\mathbf{j}} \setminus F_{\mathbf{j}})} (f \circ \alpha) \alpha^* dw_{\mathbf{j}}.
\end{equation}

If $\alpha_*\mu_1 = \mu_2$ held, we would have the equality
\begin{equation}
\overline{\bigcup_{\mathbf{i} \in I} (X_{\mathbf{i}} \setminus E_{\mathbf{i}})} = \overline{\bigcup_{\mathbf{j} \in J} \alpha^{-1}(Y_{\mathbf{j}} \setminus F_{\mathbf{j}})}.
\end{equation}
The right-hand side contains the subset $((S^1)^t \times \mathbb{P}^w) \cap \overline{X}\times \Delta$ where $\Delta$ is the diagonal in $X^{[M+1]}$.

However, since $d_M > t$ and each $\mathbf{i}$ indexes $d_M$ coordinates, any point $$x = (x_1,\ldots,x_{t(M+2)},z) \in X^{[M+2]} \subset (\mathbb{P}^1)^{t(M+2)} \times \mathbb{P}^w$$ in the left-hand side must have at least $t(M+1)$ coordinates in $S^1$. This directly contradicts Lemma \ref{5521}(a), completing the proof by contradiction.

\textbf{Step 3. Apply equidistribution result twice.}

As $\alpha_*\mu_1 \neq \mu_2$, we may 
select test functions $f_1 \in C_c^\infty(X^{[M+2],\mathrm{an}})$ and $f_2 \in C_c^\infty(\alpha(X^{[M+2]})^{\mathrm{an}})$ satisfying:
\begin{enumerate}
\item Existence of $\epsilon_{S'} > 0$ such that
\begin{equation}\label{C}
\left| \int_{X^{[M+2],\mathrm{an}}} f_1 d\mu_1 - \int_{\alpha(X^{[M+2]})^{\mathrm{an}}} f_2 d\mu_2 \right| \geq 2\epsilon_{S'}
\end{equation}

\item Function relation: $f_1 = f_2 \circ \alpha$
\end{enumerate}

\begin{lem}\label{sh}
There exist $\delta_{S'} > 0$ and a proper Zariski closed subset $Z_{S'} \subset X^{[M+2]}$ such that for all $x \in (X^{[M+2]} \setminus Z_{S'})(\overline{\mathbb{Q}})$:
\begin{enumerate}
\item Either $\hat{h}_{L^{\boxtimes u(M+2)}}(x) \geq \delta_{S'}$,
\item Or $\hat{h}_{L^{\boxtimes u(M+1)}}(\alpha(x)) \geq \delta_{S'}$.
\end{enumerate}
\end{lem}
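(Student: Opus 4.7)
The plan is to apply Proposition \ref{433} (the uniform consequence of equidistribution) separately to the two non-degenerate subvarieties $X^{[M+2]}$ and $\alpha(X^{[M+2]})$, and then to glue together the resulting exceptional loci using the crucial compatibility that Galois-orbit averages of $f_1$ over $x$ coincide with Galois-orbit averages of $f_2$ over $\alpha(x)$.

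Concretely, I would first feed $(X^{[M+2]}, L^{\boxtimes u(M+2)}, f_1, \epsilon_{S'})$ into Proposition \ref{433} to obtain a proper Zariski closed subvariety $Z_1 \subset X^{[M+2]}$ and a constant $\delta_1 > 0$ such that every $x \in (X^{[M+2]} \setminus Z_1)(\overline{\mathbb{Q}})$ either satisfies $\hat{h}_{L^{\boxtimes u(M+2)}}(x) \geq \delta_1$ or
\[
\left|\frac{1}{\#O(x)}\sum_{z \in O(x)} f_1(z) - \int_{X^{[M+2],\mathrm{an}}} f_1\,d\mu_1\right| < \epsilon_{S'}.
\]
A parallel application to $(\alpha(X^{[M+2]}), L^{\boxtimes u(M+1)}, f_2, \epsilon_{S'})$ yields a proper Zariski closed $Z_2 \subset \alpha(X^{[M+2]})$ and $\delta_2 > 0$ with the analogous dichotomy for $\alpha(x)$. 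I then set $\delta_{S'} := \min(\delta_1, \delta_2)$ and $Z_{S'} := Z_1 \cup \alpha^{-1}(Z_2)$; the latter is a proper Zariski closed subset of $X^{[M+2]}$ since $\alpha$ maps surjectively onto $\alpha(X^{[M+2]})$, so any point of $\alpha(X^{[M+2]}) \setminus Z_2$ has a preimage outside $\alpha^{-1}(Z_2)$.

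The key compatibility step is that $\alpha$ is defined over $\mathbb{Q}$ (the fiberwise Faltings--Zhang map is coordinatewise inverse multiplication composed with the inclusion into $\mathbf{H}$), hence $\alpha(x^\sigma) = \alpha(x)^\sigma$ for every $\sigma \in \mathrm{Gal}(\overline{\mathbb{Q}}/\mathbb{Q})$. Consequently the natural surjection $O(x) \to O(\alpha(x))$ has constant fiber cardinality $\#O(x)/\#O(\alpha(x))$, and combined with the identity $f_1 = f_2 \circ \alpha$ this yields
\[
\frac{1}{\#O(x)}\sum_{z \in O(x)} f_1(z) = \frac{1}{\#O(\alpha(x))}\sum_{y \in O(\alpha(x))} f_2(y).
\]

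To conclude, suppose toward contradiction that some $x \in (X^{[M+2]} \setminus Z_{S'})(\overline{\mathbb{Q}})$ violates both conclusions of the lemma. Then both height lower bounds fail, so the alternative branches of Proposition \ref{433} apply for both $x$ and $\alpha(x)$; the triangle inequality combined with the identity above produces $\left|\int f_1\,d\mu_1 - \int f_2\,d\mu_2\right| < 2\epsilon_{S'}$, contradicting inequality (\ref{C}). I do not expect a real obstacle in this argument: the only non-formal point is the Galois equivariance of $\alpha$, which is immediate from its rationality, and the properness of $\alpha^{-1}(Z_2)$, which follows from surjectivity onto the image.
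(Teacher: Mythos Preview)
Your proposal is correct and follows essentially the same approach as the paper's proof: apply Proposition~\ref{433} to each of $X^{[M+2]}$ and $\alpha(X^{[M+2]})$, take $Z_{S'}=Z_1\cup\alpha^{-1}(Z_2)$, and derive a contradiction with~(\ref{C}) via the identity of Galois-orbit averages coming from $f_1=f_2\circ\alpha$. Your write-up is in fact slightly more careful than the paper's, in that you explicitly justify why $\alpha^{-1}(Z_2)$ is proper (surjectivity onto the image) and why the orbit-average identity holds (Galois equivariance of $\alpha$ and constant fiber size of $O(x)\to O(\alpha(x))$), points the paper leaves implicit.
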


\begin{proof}[Proof of the lemma]
Using the test functions $f_1,f_2$ as above, apply Proposition \ref{433} to the triples $(X^{[M+2]}, f_1, \epsilon_{S'})$ and $(\alpha(X^{[M+2]}), f_2, \epsilon_{S'})$.

This produces a constant $\delta_{S'} > 0$,, proper Zariski closed subsets $Z_{S',1} \subset X^{[M+2]}$ and $Z_{S',2} \subset \alpha(X^{[M+2]})$ satisfying Proposition \ref{433}. Define the exceptional set
\begin{equation}
Z_{S'} := Z_{S',1} \cup \alpha^{-1}(Z_{S',2}),
\end{equation}
which remains a proper Zariski closed subset of $X^{[M+2]}$.

For any $x \in (X^{[M+2]} \setminus Z_{S'})(\overline{\mathbb{Q}})$ with both
\begin{align*}
\hat{h}_{(L')^{\boxtimes u(M+2)}}(x) &< \delta_{S'} \quad \text{and} \\
\hat{h}_{L^{\boxtimes u(M+1)}}(\alpha(x)) &< \delta_{S'},
\end{align*}
the equidistribution estimates yield:
\begin{equation}
\left| \frac{1}{\# O(x)} \sum_{y \in O(x)} f_1(y) - \int_{X^{[M+2],\mathrm{an}}} f_1 d\mu_1 \right| < \epsilon_{S'}
\end{equation}
\begin{equation}
\left| \frac{1}{\# O(\alpha(x))} \sum_{y \in O(\alpha(x))} f_2(y) - \int_{\alpha(X^{[M+2]})^{\mathrm{an}}} f_2 d\mu_2 \right| < \epsilon_{S'}
\end{equation}

From the functional relation $f_1 = f_2 \circ \alpha$, we derive:
\begin{equation}
\frac{1}{\# O(x)} \sum_{y \in O(x)} f_1(y) = \frac{1}{\# O(\alpha(x))} \sum_{y \in O(\alpha(x))} f_2(y)
\end{equation}

This leads to the contradiction:
\begin{equation}
\left| \int_{X^{[M+2],\mathrm{an}}} f_1 d\mu_1 - \int_{\alpha(X^{[M+2]})^{\mathrm{an}}} f_2 d\mu_2 \right| < 2\epsilon_{S'},
\end{equation}
directly opposing inequality (\ref{C}).
\end{proof}

\textbf{Step 4. Induction to conclude.}

We establish the result by induction on \( v := \dim S \). Let \( S'' := S' \setminus \pi(X^{[M+2]} \setminus Z_{S'}) \) with reduced subscheme structure. Since \( X^{[M+2]} \setminus Z_{S'} \) is non-empty, we have:
\[
\dim S'' \leq \dim S' - 1
\]
The construction of \( S'' \) depends solely on \( r,d,n,S' \), with finitely many irreducible components determined by these parameters. Replace by $S''$ by its irreducible component, we may assume \( S'' \) is irreducible without loss of generality.

Suppose $s\in S^{'}(\overline{\mathbb{Q}})$ be a point satisfying (1) and (2) in the statement of the theorem. 

\begin{itemize}
\item If \( s \in S''(\overline{\mathbb{Q}}) \), induction provides constants \( c_1(r,d,n,S''), c_2(r,d,n,S'') \) and a proper Zariski closed \( X' \subset (\mathcal{X}_{r,d})_s \) such that:
\begin{enumerate}
\item \( \{x \in (\mathcal{X}_{r,d})_s(\overline{\mathbb{Q}}) : \hat{h}_L(x) \leq c_2\} \subset X' \)
\item \( \deg_{L_s}(X') < c_1 \)
\end{enumerate}
As $S^{''}$ only depends on $S$, constants $c_1(r,d,n,S^{''})$, $c_2(r,d,n,S^{''})$ depend on $r,d,n,S$ solely (but not $s\in S(\overline{\mathbb{Q}})$. ).

\item If \( s \notin S''(\overline{\mathbb{Q}}) \), define \( c'_2 := \delta_{S'}/(4u(M+2)) \) and consider:
\[
\Sigma_{S',s} := \{x \in (\mathcal{X}_{r,d})_s(\overline{\mathbb{Q}}) : \hat{h}_L(x) \leq c'_2\}
\]
\end{itemize}

\textbf{Claim:} \( \Sigma_{S',s}^{u(M+2)} \subset (Z_{S'})_s(\overline{\mathbb{Q}}) \)

For \( x = (x_1,\ldots,x_{u(M+2)}) \in \Sigma_{S',s}^{u(M+2)} \setminus (Z_{S'})_s(\overline{\mathbb{Q}}) \):
\[
\hat{h}_{L^{\boxtimes u(M+2)}}(x) = \sum_{i=1}^{u(M+2)} \hat{h}_L(x_i) \leq u(M+2)c'_2 < \delta_{S'}
\]
\[
\hat{h}_L(x_ix_j^{-1}) \leq 2\hat{h}_L(x_i) + 2\hat{h}_L(x_j) \leq 4c'_2 \implies \hat{h}_{L^{\boxtimes u(M+1)}}(\alpha(x)) \leq 4u(M+1)c'_2 < \delta_{S'}
\]
This contradicts Lemma \ref{sh}.

\begin{lem}[Lemma 4.3, \cite{Gao21}]
Let $k$ be a field, $Y$ an irreducible projective variety over $k$, and $L^{'}$ a very ample line bundle on $Y$. Consider a closed subvariety $Z^{'}\subsetneq Y^{N}$. Then, there exists a constant
$$c=c(N,\dim Y,\deg_{L^{'}}Y,\deg_{(L^{'})^{\boxtimes N}}(Z^{'}))>0$$ such that, for any subset $\Sigma\subset Y(k)$ satisfying $\Sigma^{N}\subset Z^{'}(k)$ there exists a proper closed subvariety $Y^{'}\subset Y$ with $\Sigma\subset Y^{'}(k)$ and $\deg_{L^{'}}(Y^{'})<c$.
\end{lem}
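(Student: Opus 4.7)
The plan is to prove the lemma by induction on $N$. The base case $N = 1$ is immediate: $Z' \subsetneq Y$ itself contains $\Sigma$, so we take $Y' := Z'$ with $\deg_{L'}(Y') = \deg_{L'}(Z')$.

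For the inductive step, I would consider the projection $\pi \colon Y^N \to Y^{N-1}$ onto the first $N-1$ factors and introduce the auxiliary closed subset
\[
W := \bigl\{(y_1, \ldots, y_{N-1}) \in Y^{N-1} : \{(y_1, \ldots, y_{N-1})\} \times Y \subset Z'\bigr\}.
\]
By upper semicontinuity of fiber dimension applied to $\pi|_{Z'} \colon Z' \to Y^{N-1}$, the set $W$ is Zariski closed in $Y^{N-1}$, and since $Z' \subsetneq Y^N$ with $Y^N$ irreducible, $W$ is a proper subvariety. I then split on whether $\Sigma^{N-1}$ is contained in $W$. If there exists a tuple $(y_1, \ldots, y_{N-1}) \in \Sigma^{N-1} \setminus W(k)$, then the fiber $Y' := \{y \in Y : (y_1, \ldots, y_{N-1}, y) \in Z'\}$ is a proper closed subvariety of $Y$ automatically containing $\Sigma$; viewing this fiber as the intersection $Z' \cap (\{(y_1, \ldots, y_{N-1})\} \times Y)$ inside $Y^N$, a standard Bezout-type inequality bounds its degree in terms of $\deg_{(L')^{\boxtimes N}}(Z')$, $\deg_{L'}(Y)$, and $\dim Y$. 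Otherwise $\Sigma^{N-1} \subset W(k)$, and the inductive hypothesis applied to $(W, N-1)$ yields a proper $Y' \subsetneq Y$ containing $\Sigma$ with degree bounded in terms of $\deg_{(L')^{\boxtimes (N-1)}}(W)$ and the other data.

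The main obstacle is securing a uniform bound on $\deg_{(L')^{\boxtimes(N-1)}}(W)$ purely in terms of $\deg_{(L')^{\boxtimes N}}(Z')$, $\deg_{L'}(Y)$, $\dim Y$, and $N$, so that the induction actually propagates to a clean constant. This bound can be obtained through classical intersection theory on products of projective varieties: $W$ arises as the image under a projection of an incidence-type subvariety built from $Z'$, and standard Bezout/projection inequalities (in the spirit of those codified by Philippon or Heintz) provide the required effective estimate. Once this degree control is in place, combining the two branches of the dichotomy closes the induction and produces the asserted constant $c(N, \dim Y, \deg_{L'} Y, \deg_{(L')^{\boxtimes N}}(Z'))$.
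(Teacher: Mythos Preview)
The paper does not actually prove this lemma: it is quoted verbatim as Lemma~4.3 of \cite{Gao21} and immediately applied, with no argument supplied. So there is no ``paper's own proof'' to compare against here.

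That said, your proposed argument is the standard one and is essentially how the lemma is established in \cite{Gao21}: induction on $N$, with the dichotomy governed by the closed locus $W\subset Y^{N-1}$ over which the fiber of $Z'$ is all of $Y$. Both branches are handled as you describe, and you have correctly isolated the only nontrivial point, namely the uniform bound on $\deg_{(L')^{\boxtimes(N-1)}}(W)$. One clean way to see this bound (and avoid heavy machinery) is to note that for any fixed $y\in Y(k)$ the slice $Z'_y:=\{(y_1,\dots,y_{N-1}):(y_1,\dots,y_{N-1},y)\in Z'\}$ has degree controlled by B\'ezout, and $W=\bigcap_y Z'_y$; one then shows this intersection stabilizes after a number of slices bounded in terms of the available degree data, again by B\'ezout. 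With that in place your induction closes exactly as you say.
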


Invoke this lemma with parameters:
\begin{itemize}
\item \( Y := (\mathcal{X}_{r,d})_s \), \( L' = L_s|_{(\mathcal{X}_{r,d})_s} \)
\item \( Z' = Z_{S',s} \), \( \Sigma = \Sigma_{S',s} \)
\end{itemize}
This produces a proper Zariski closed \( X' \subset (\mathcal{X}_{r,d})_s \) satisfying:
\begin{enumerate}
\item \( \Sigma_{S,s} \subset X'(\overline{\mathbb{Q}}) \)
\item \( \deg_{L_s}(X') \leq c(u(M+2), r, d, \deg_{L_s^{\boxtimes u(M+2)}}(Z_{S',s})) \)
\end{enumerate}

Since \( u, M, Z_{S'} \) depend only on \( r,d,n,S \), the bounding constant:
\[
c(u(M+2), r, d, \deg_{L_s^{\boxtimes u(M+2)}}(Z_{S',s}))
\]
ultimately depends only on \( r,d,n,S \), completing the proof.
\end{proof}

\Addresses
\end{document}